\tikzset{axis/.style={&lt;-&gt;}}
\newcommand\reallywidehat[1]{%
\savestack{\tmpbox}{\stretchto{%
  \scaleto{%
    \scalerel*[\widthof{\ensuremath{#1}}]{\kern-.6pt\bigwedge\kern-.6pt}%
    {\rule[-\textheight/2]{1ex}{\textheight}}
  }{\textheight}%
}{0.5ex}}%
\stackon[1pt]{#1}{\tmpbox}%
}
 \definecolor{MyBlue}{rgb}{0.05, 0.25, 0.65}
 \definecolor{MyRed}{rgb}{0.90, 0.05, 0.05}
\definecolor{MyGreen}{rgb}{0.05, 0.90, 0.05}
\newcommand{\B}{\boldsymbol}
\newcommand{\C}[1]{\mathcal{#1}}
\newcommand{\D}[1]{\mathbb{#1}}
\newtheorem{theorem}{Theorem}[section]
\newtheorem{proposition}[theorem]{Proposition}
\newtheorem{lemma}[theorem]{Lemma}
\newtheorem{corollary}[theorem]{Corollary}
\newtheorem{remark}[theorem]{Remark}
\newtheorem{definition}[theorem]{Definition}
\newcommand{\Nat}{{\mathbb N}}
\newcommand{\Real}{{\mathbb R}}
\newcommand{\id}{\mathrm{id}}
\newcommand{\Const}{\mathrm{Const}}
\newcommand{\Bic}{\mathrm{Bic}}
\newcommand{\BISH}{\mathrm{BISH}}
\newcommand{\CST}{\mathrm{CST}}
\newcommand{\TOT}{\Leftrightarrow}
\newcommand{\To}{\Rightarrow}
\newcommand{\sto}{\rightsquigarrow}
\newcommand{\MLTT}{\mathrm{MLTT}} 
\newcommand{\CZF}{\mathrm{CZF}}
\newcommand{\CM}{\mathrm{CM}}
\newcommand{\prb}{\textnormal{\textbf{pr}}}
\newcommand{\pr}{\textnormal{\texttt{pr}}}
\newcommand{\BST}{\mathrm{BST}}
\newcommand{\Set}{\mathrm{\mathbf{Set}}}
\newcommand{\op}{\mathrm{op}}
\newcommand{\Hom}{\mathrm{Hom}}
\newcommand{\f}{\textnormal{\texttt{f}}}
\newcommand{\emptys}{\slash{\hspace{-2mm}}\Box}
\newcommand{\fV}{\D V_0^{\f}}
\newcommand{\Ineq}{\textnormal{\texttt{Ineq}}}
\newcommand{\SetIneq}{\textnormal{\textbf{SetIneq}}}
\newcommand{\SetComplSep}{\textnormal{\textbf{SetComplSep}}}
\newcommand{\SetStrong}{\textnormal{\textbf{SetStrong}}}
\newcommand{\BMT}{\mathrm{BMT}} 
\newcommand{\BCMT}{\mathrm{BCMT}}
\newcommand{\SetAffine}{\textnormal{\textbf{SetAffine}}}
\newcommand{\Aff}{\mathbb{Af}}
\newcommand{\Frg}{\textnormal{\texttt{Frg}}}
\newcommand{\Free}{\textnormal{\texttt{Free}}}
\newcommand*\circled[2][1.6]{\tikz[baseline=(char.base)]{
    \node[shape=circle, draw, inner sep=1pt, 
        minimum height={\f@size*#1},] (char) {\vphantom{WAH1g}#2};}}
\newcommand{\Top}{\textnormal{\textbf{Top}}}
\newcommand{\crTop}{\textnormal{\textbf{crTop}}}
\newcommand{\FunSpace}{\textnormal{\textbf{FunSpace}}}
\newcommand{\Emb}{\textnormal{\texttt{Emb}}}
\newcommand{\Dual}{\textnormal{\texttt{Dual}}}
\begin{document}

\date{}

\title{\textbf{Sets completely separated by functions in Bishop Set Theory}}

\author{Iosif Petrakis\\	
Mathematics Institute, Ludwig-Maximilians-Universit\"{a}t M\"{u}nchen\\
petrakis@math.lmu.de}  

%





\maketitle

\begin{abstract}
\noindent 
Within Bishop Set Theory, a reconstruction of Bishop's theory of sets,
we study the so-called completely separated sets, that is sets equipped with a positive notion of an inequality,
induced by a given set of real-valued functions. We introduce the notion of a global family of
completely separated sets over an index-completely separated set, and we 
describe its Sigma- and Pi-set. The free completely separated set on a given set is also presented. Purely set-theoretic
versions of the classical Stone-\v{C}ech theorem and the Tychonoff embedding theorem for completely regular spaces 
are given, replacing topological spaces with function spaces and completely regular spaces with completely 
separated sets.\\[2mm]
\textit{Keywords}: constructive mathematics, Bishop set theory, apartness relation 
\end{abstract}


\section{Introduction}
\label{sec: intro}

Constructive mathematics $(\CM)$, although hard to define, is not only mathematics within intuitionisitic logic. It also 
encompasses certain ``attitudes'' towards the definition of mathematical concepts, in order to 
reveal the computational content of mathematical proofs of theorems concerning these concepts. Some of these attitudes,
which are not uniformly and consistently followed by the practitioners of $\CM$ though, are the following:
\begin{itemize}
 \item \textit{Predicativity}, that is the avoidance of circular, or impredicative definitions, 
 which often amounts to the avoidance of 
 quantification over proper classes in the definition of a mathematical concept.
 \item The addition of \textit{witnessing information} so that even weak choice
principles, such as countable choice\footnote{An approach to $\CM$ without countable choice was spearheaded by
Richman~\cite{Ri01}.}, are avoided in proofs. Bishop's moduli of convergence, of continuity, of
differentiability etc., in~\cite{Bi67} are examples of added witnesses to convergence, continuity, differentiability etc.
\item Preference for \textit{positive definitions} of concepts 
over negative ones, that is definitions using negation. The most characteristic example of the positive approach to 
mathematical concepts within $\CM$, which goes back to Brouwer, 
is the notion of inequality, or apartness relation on a set.
\item Preference for \textit{function-based concepts} over set-based ones, as 
functions suit better to $\CM$ than sets. Characteristic examples of this attitude are the definition of an 
integrable complemented set through the integrability of its partial characteristic function in 
constructive measure theory (see~\cite{BC72, BB85}) and the reduction of the topology of open sets to 
the topology of functions in constructive topology of Bishop spaces (see~\cite{Br12, Is13},~\cite{Pe15}-\cite{Pe22b} 
and~\cite{Pe23}).

\end{itemize}

In this paper we highlight the combination of the last two attitudes in the study of sets with an inequality,
determined in a positive way from a given set of functions. As many results shown here reveal an analogy between these
sets with the completely regular
topological spaces, we call them \textit{completely separated sets}.
Although the notion of an inequality induced by real-valued functions is implicitly used
by Bishop~\cite{Bi67}, p.~66, in his definition of a complemented subset, 
an elaborate study of sets equipped with such an inequality is missing. Ruitenburg's work~\cite{Ru91}, in this journal,
on inequalities in $\CM$ is set-based.

The various distinctions revealed within $\CM$ are also reflected in the 
various categories of sets that can be defined within $\CM$. 
Besides the standard category $\Set$ of sets $(X, =_X)$ and functions, we have
the category $\SetIneq$ of sets with an inequality (see Definition~\ref{def: ineq})
and strongly extensional functions as arrows (see Definition~\ref{def: fapartness}). 
This category is the ``universe'' of
Bishop-Cheng measure theory $(\BCMT)$, introduced in~\cite{BC72} and extended significantly in~\cite{BB85}.
In~\cite{Pe22c} we also introduce the category of strong sets
$\SetStrong$, a subcategory of $\SetIneq$, where the inequalities considered are equivalent to the \textit{strong negation}
of the corresponding equalities, a positive and strong counterpart to the standard \textit{weak
negation}. In the category $\SetComplSep$ of sets completely separated by 
real-valued functions on them (see Definition~\ref{def: fsets}), a subcategory of $\SetIneq$, the 
given inequalities and equalities are equivalent to ones induced by a given set of real-valued functions.
The importance of these inequalities lies in the complete avoidance of negation in their definition and in the proof
of their basic properties. The category $\SetComplSep$ is the 
`universe'' of Bishop measure theory\footnote{For the relation between $\BMT$ and $\BCMT$ 
see~\cite{Pe20, PW22, PZ22}.}$(\BMT)$, developed in~\cite{Bi67}.

We work within \textit{Bishop Set Theory} $(\BST)$, an informal, constructive theory of totalities and 
assignment routines, elaborated in~\cite{Pe19d, Pe20, Pe21, Pe22a}, that serves as a ``completion'' of Bishop's original
theory of sets in~\cite{Bi67, BB85}. $\BST$ highlights fundamental notions 
that were suppressed by Bishop in his account of the set theory 
underlying Bishop-style constructive mathematics $(\BISH)$, and serves as an intermediate step between
Bishop's informal theory of sets 
and an \textit{adequate} and \textit{faithful}
formalisation of $\BISH$, in Feferman's sense~\cite{Fe79}. To assure faithfulness,
we use concepts or principles that appear, explicitly or implicitly, in $\BISH$.
The features of $\BST$ in~\cite{Pe20} that somehow ``complete'' Bishop's original theory of sets are: the
explicit use of an open-ended universe $\D V_0$ of predicative sets, a clear distinction between sets and proper classes,
such as $\D V_0$, the explicit use of dependent operations, and the elaboration of the theory of set-indexed 
families of sets. Similarly to Martin-L\"of Type Theory $(\MLTT)$ (see for example~\cite{ML75, ML98}), $\BST$ 
behaves like a high-level programming language\footnote{The type-theoretic interpretation of Bishop's set theory into the theory
of setoids (see especially the work of Palmgren~\cite{Pa05}-\cite{PW14}) has become nowadays the standard
way to understand Bishop sets. For an analysis of the relation between intensional $\MLTT$ and Bishop's
theory of sets see~\cite{Pe20}, Chapter 1. Other formal systems for $\BISH$ are Myhill's Constructive Set Theory $(\CST)$,
introduced in~\cite{My75}, and Aczel's system $\CZF$ (see~\cite{AR10}). Palmgren's work~\cite{Pa12} and Coquand's
work~\cite{Co17} are categorical approaches to Bishop
sets.}.

We structure this paper as follows:
\begin{itemize}
 \item In section~\ref{sec: ineqs} we introduce the category $\SetIneq$ of sets with an inequality and strongly
 extensional functions, and we define the Sigma-and the Pi-set of a family of sets
 with an inequality over an index-set with an inequality.

 \item In section~\ref{sec: fineqs} we define the canonical equality $=_{(X,F)}$ and inequality $\neq_{(X,F)}$ 
 induced on a set $(X, =_X)$ by  an extensional subset $F$ of the real-valued functions $\D F(X)$ on $X$.
 The extensionality and the tightness of the inequality $\neq_{(X,F)}$ avoid completely the use of negation.
 Basic properties of these inequalities are shown.
 
 \item In section~\ref{sec: scs} we introduce the category $\SetComplSep$ of completely separated sets, a full subcategory
 of $\SetIneq$, and the category $\SetAffine$ of affine sets, a subcategory of $\SetComplSep$ with affine arrows only.
 We define the notion of a family of completely separated sets over an index-completely separated set\footnote{All notions
 of families of sets defined here are not studied in~\cite{Pe20}.}, we describe 
 its corresponding  Pi-set, and we provide a sufficient condition, in order to get a restricted form of a Sigma-set for it
 (Proposition~\ref{prp: fcl3}). By introducing the notion of a global family of completely separated sets over an 
 index-completely separated set, we manage to describe its Sigma-set as a completely separated set, and to generalise the
 notion of a strongly extensional function to dependent functions. The second projection of the Sigma-set of such a
 global family is shown to be a strongly extensional dependent function (Corollary~\ref{cor: sepr2}(ii)).

\item In section~\ref{sec: free} we define the free completely separated set 
$\varepsilon \B X$ on a given set $(X, =_X)$,
 we prove its universal property, and we show that the functor $\Free \colon \Set \to \SetComplSep$ is left adjoint to  
 the corresponding forgetful functor $\Frg \colon \SetComplSep \to \Set$. Theorem~\ref{thm: free1} corresponds 
 to the type-theoretic  fact that the setoid $(X, =_X)$, where $=_X$ is the equality type family on the type $X$
 in intensional $\MLTT$, is the free setoid on the type $X$.
 
 \item In section~\ref{sec: SC} we prove a  purely set-theoretic version of the Stone-\v{C}ech theorem in 
 classical topology, according to which, to any topological space corresponds 
 a completely regular one such that the two spaces have isomorphic rings of continuous,
 real-valued functions, and the corresponding functor is a reflector. Replacing topological spaces with function spaces, that is
 triplets $(X, =_X ; F)$ as above, and completely regular spaces with completely separated sets, we correspond 
 to any function space a completely separated set with the same carrier set and (separating) set of functions
 (Theorem~\ref{thm: sc}). The corresponding functor $\rho \colon \FunSpace \to \SetAffine$ is shown to be a reflector, 
 hence left adjoint to the corresponding embedding functor $\Emb \colon \SetAffine \to \FunSpace$. Moreover,
 $\Emb$ is also left adjoint to $\rho$ (Proposition~\ref{prp: rho}).

 \item In section~\ref{sec: TET} we prove a  purely set-theoretic version of the Tychonoff embedding theorem in 
 classical topology, according to which, a $T_1$ topological space is completely regular if and only if it 
is topologically embedded into a product of $[0, 1]$. According to Theorem~\ref{thm: tet}, if $(X, =_X ; F)$ is a 
function space, and if the induced inequality $\neq_{(X,F)}$ on $X$ is tight, then there is an affine embedding of the  
completely separated set $(X, =_X, \neq_{(X, F)} ; F)$ into the completely separated set $\B R^{F}$. Conversely,  
if $e \colon (X, =_X ; F) \to (\Real^{F}, =_{\Real^{F}} ; \bigotimes_{f \in F}\{\id_{\Real}\})$ is an affine 
embedding in $\FunSpace$, the  induced inequality $\neq_{(X,F)}$ on $X$ is tight. The latter result provides
a criterion for the generation of a completely separated set from a given function space.

\end{itemize}

For all notions and results from $\BST$ that are used here without explanation or proof, we refer 
to~\cite{Pe20}.  
For all categorical notions and facts that are used here without explanation or proof, we refer 
to~\cite{Aw10}. A formulation of category 
theory within $\BST$ is presented in~\cite{Pe22f}.

\section{Sets with an inequality}
\label{sec: ineqs}

In this section we introduce the category $\SetIneq$ of sets with an inequality.
We use a terminology on inequalities that is closer to~\cite{MRR88}, rather than to~\cite{BB85}. 
In~\cite{BB85}, p.~72, an inequality is what we call here an apartness relation. 
In~\cite{Sh21}, p.~31, an inequality is considered to be always symmetric. As the canonical inequality defined 
though strong negation in~\cite{Pe22c} is not in general symmetric, an abstract inequality here is a 
relation on a set that contradicts its given equality. In $\BST$ falsum is
defined as the formula $\bot := 0 =_{\Nat} 1$.

\begin{definition}\label{def: ineq}
If $(X, =_X)$ is a set, and let the following formulas with respect to a relation $x \neq_X y:$\\[1mm]
$(\Ineq_1) \  \forall_{x, y \in X}\big(x =_X y \ \& \ x \neq_X y \To \bot \big)$,\\[1mm]
$(\Ineq_2) \ \forall_{x, x{'}, y, y{'} \in X}\big(x =_X x{'} \ \& \ 
y =_X y{'} \ \& \ x \neq_X y \To x{'} \neq_X y{'}\big)$,\\[1mm]
$(\Ineq_3) \ \forall_{x, y \in X}\big(\neg(x \neq_X y) \To x =_X y\big)$,\\[1mm]
$(\Ineq_4)  \forall_{x, y \in X}\big(x \neq_X y \To y \neq_X x\big)$,\\[1mm]
$(\Ineq_5) \ \forall_{x, y \in X}\big(x \neq_X y \To \forall_{z \in X}(z \neq_X x \ \vee \ z \neq_X y)\big)$,\\[1mm]
$(\Ineq_6) \ \forall_{x, y \in X}\big(x =_X y \vee x \neq_X y\big)$.\\[1mm]
If $\Ineq_1$ is satisfied, we call $\neq_X$ an 
inequality on $X$, and the 
structure $\B X := (X, =_X, \neq_X)$ a set with an inequality. We also write $|\B X| := X$.
If $(\Ineq_6)$ is satisfied, then $\B X$ is 
called \textit{discrete}.
An inequality is called extensional, if $(\Ineq_2)$ holds, and it is called tight, if $(\Ineq_3)$ is satisfied.
An inequality satisfying $(\Ineq_4)$ and $(\Ineq_5)$ is called an apartness relation on $X$.
If $\B Y := (Y, =_Y, \neq_Y)$ is a set with an inequality,
a function\footnote{If $(X, =_X)$ and $(Y, =_Y)$ are sets, an assignment routine $f \colon X \sto Y$ is a function, if it 
preserves the corresponding equalities. The notion of a (non-dependent) assignment routine, that is of a routine that corresponds an 
element $f(x)$ of $Y$ to each element $x$ of $X$, is 
primitive in $\BST$.} $f \colon X \to Y$ is strongly extensional,
if, 
$$\forall_{x, y \in X}\big(f(x) \neq_Y f(y) \To x \neq_X y\big).$$
Let $\D F^{\neq}(\B X, \B Y)$ be the set\footnote{One needs the extensionality of $\neq_Y$ to show that 
the strong extensionality of $f \in \D F(X, Y)$ is an extensional property on $\D F(X, Y)$, and define then 
$\D F^{\neq}(\B X, \B Y)$ using the separation scheme. As $(\Ineq_2)$ is not considered here part of the definition of an 
inequality, we introduce $\D F^{\neq}(\B X, \B Y)$ independently from $\D F(X, Y)$.}   
of strongly extensional functions from $\B X$ to $\B Y$,
equipped with the pointwise equality. Furthermore, let
$\D V_0^{\neq}$ be the proper class of sets with an inequality, equipped with the equality
$$\B X =_{\D V_0^{\neq}} \B Y :\TOT \exists_{f \in \D F^{\neq}(\B X, \B Y)}\exists_{g \in \D F^{\neq}(\B Y, 
\B X)}\big((f, g) \colon X =_{\D V_0} Y\big).$$
Let $\SetIneq$ be the category of sets with an inequality and strongly extensional functions.
\end{definition}

An equality is always extensional on $X \times X$. There are non-extensional properties on a set, that is 
formulas $P(x)$, where $x$ is a variable of the set $X$, such that $P(x)$ and $x =_X y$ do not imply $P(y)$.
For example, let $n \in \Nat$, $q \in \D Q$ and $P_q(x)$, where $x$ is a variable of set $\Real$, defined by
 $P_q(x) := x_n =_{\D Q} q.$
 If $y \in \Real$ such that $y =_{\Real} x$, then it is not necessary that $y_n =_{\D Q} q$, if $ x_n =_{\D Q} q$.
It is not easy to give examples of non strongly extensional functions, although we cannot accept
in $\BISH$ that all functions are strongly extensional. For example, the strong extensionality of 
all functions from a metric space to itself is equivalent to Markov's principle (see~\cite{Di20}, p.~40).
Even to show that a constant function between sets with an inequality is strongly extensional, one needs 
intuitionistic,  and not minimal, logic.

\begin{remark}\label{rem: apartness1}
An apartness relation on a set $(X, =_X)$ is an extensional inequality.
\end{remark}

\begin{proof}
Let $x, y \in X$ with $x \neq_X y$, and $x{'}, y{'} \in X$ with $x{'} =_X x$ 
and $y{'} =_X y$. By $(\Ineq_5)$ we get $x{'} \neq_X x$, which is excluded from $(\Ineq_1)$, or $x{'} \neq_X y$,
which has to be the case. Hence, $y{'} \neq_X x{'}$, or $y{'} \neq_X y$. Since the latter is excluded similarly,
we get $y{'} \neq_X x{'}$, hence by $(\Ineq_4)$ we get $x{'} \neq_X y{'}$.
\end{proof}

For the canonical equalities of all sets mentioned here we refer to~\cite{Pe20}, Chapter 2, and to~\cite{BB85}.

\begin{definition}\label{def: canonical}
Let  $\B X := (X, =_X, \neq_X)$ and $\B Y := (Y, =_Y, \neq_Y)$ be in $\SetIneq$, and let $\big(A, i_A\big) \subseteq X$.
The canonical inequalities on the product $X \times Y$, the function space $\D F(X, Y)$ and $A$
are given, respectively, by
$$(x, y) \neq_{X \times Y} (x{'}, y{'}) :\TOT x \neq_X x{'} \vee y \neq_Y y{'},$$
$$f \neq_{\D F(X, Y)} g :\TOT \exists_{x \in X} \big[f(x) \neq_Y g(x)\big],$$
$$a \neq_A a{'} :\TOT i_A(a) \neq_X i_A(a{'}).$$
The canonical inequality on the set of reals $\Real$ is given by 
$a \neq_{\Real} b :\TOT |a - b| > 0 \TOT a > b \vee a < b$, which is  
a special case of the canonical inequality on a metric space $(Z, d)$, given by 
$z \neq_{(Z,d)} z{'} :\TOT d(z, z{'}) > 0$. 
Let
$\B R := (\Real, =_{\Real}, \neq_{\Real})$, $\B Z := (Z, =_{(Z,d)}, \neq_{(Z,d)})$, and  
$\B N := (\Nat, =_{\Nat}, \neq_{\Nat})$. If $\D 2 := \{x \in \Nat \mid 
x =_{\Nat} 0 \vee x =_{\Nat} 1\}$, let $\B 2 := (\D 2, =_{\D 2}, \neq_{\D 2})$, where the $($in$)$equality on $\D 2$ is 
induced by the $($in$)$equality on $\Nat$.
\end{definition}

The inequalities $a \neq_{\Real} b$, $z \neq_{(Z,d)} z{'}$, $\neq_{\Nat}$ and $\neq_{\D 2}$ are tight apartness relations.
Clearly, the projections $\pr_X, \pr_Y$ associated to $X \times Y$, and also, by definition, 
the embedding $i_A^X$ of a subset $A$ of $X$, 
are strongly extensional functions. The dependent assignment routines described next are primitive objects in $\BST$.

\begin{definition}\label{def: do}
If $(I, =_I)$ is a set and $\lambda_0 \colon I \sto \D V_0$ is an assignment routine that corresponds to each element 
$i \in I$ a set $\lambda_0(i)$, a dependent assignment routine $\Theta$
 assigns to every element $i \in I$ an element $\Theta_i$ of $\lambda_0(i)$. We denote by
 $\C D \C O(I, \lambda_0)$ the totality of dependent assignment routines over $I$ and 
$\lambda_0$, which becomes a set with an inequality, if we consider the following pointwise equality and inequality
$$\Theta =_{\C D \C O(I, 
\lambda_0)} \Theta{'} :\TOT 
\forall_{i \in I}\big(\Theta_i =_{\lambda_0(i)} \Theta{'}_i\big),$$
$$\Theta \neq_{\C D \C O(I, \lambda_0)} \Theta{'} :\TOT 
\exists_{i \in I}\big(\Theta_i \neq_{\lambda_0(i)} \Theta{'}_i\big),$$ 
where for the latter we suppose that every $\lambda_0(i)$ is equipped with an inequality $\neq_{\lambda_0(i)}$.
\end{definition}

A family of sets  indexed by some set $(I, =_I)$ is an
assignment routine $\lambda_0 : I \sto \D V_0$ that 
behaves like a function, that is if $i =_I j$, then $\lambda_0(i) =_{\D V_0} \lambda_0 (j)$.
A more explicit definition, which is due to Richman, is included in~\cite{BB85},
p.~78 (Problem 2), which is made precise in~\cite{Pe20} by highlighting 
the role of dependent assignment routines in its formulation. In accordance to the second
attitude described in the Introduction, 
this is a proof-relevant definition revealing the witnesses of the 
equality $\lambda_0(i) =_{\D V_0} \lambda_0 (j)$. 
We define the notion of a family of sets within  $\SetIneq$ similarly.

\begin{definition}\label{def: famofsets}
If $\B I := (I, =_I, \neq_I)$ is in $\SetIneq$, let the \textit{diagonal} $D(I)$ of $I$,
defined by
$$D(I) := \{(i,j) \in I \times I \mid i =_I j\}.$$
A \textit{family of sets with an inequality}\index{family of sets} indexed by $\B I$
is a pair $\Lambda := (\lambda_0, \lambda_1)$, where\index{$\lambda_0$}
$\lambda_0 \colon I \sto \D V_0^{\neq}$, 
$$\overline{\lambda}_0(i) := \big(\lambda_0(i), =_{\lambda_0(i)}, \neq_{\lambda_0(i)}\big),$$
for every $i \in I$,
and\index{$\lambda_1$} $\lambda_1$, a
\textit{modulus of function-likeness for}\index{modulus of function-likeness} $\lambda_0$, is 
a dependent operation
\[ \lambda_1 \colon \bigcurlywedge_{(i, j) \in D(I)}\D F^{\neq}\big(\lambda_0(i), \lambda_0(j)\big), 
\ \ \ \lambda_1(i, j) =: \lambda_{ij}, \ \ \ (i, j) \in D(I), \]
such that the \textit{transport maps}\index{transport map of a family of sets} $\lambda_{ij}$
\index{transport map of a family of sets}\index{$\lambda_{ij}$}
of $\boldmath \Lambda$ satisfy the following conditions:\\[1mm]
\normalfont (a) 
\itshape For every $i \in I$, we have that $\lambda_{ii} = \id_{\lambda_0(i)}$.\\[1mm]
\normalfont (b) 
\itshape If $i =_I j$ and $j =_I k$, the following triangle commutes
\begin{center}
\begin{tikzpicture}

\node (E) at (0,0) {$\lambda_0(j)$};
\node[right=of E] (F) {$\lambda_0(k).$};
\node [above=of E] (D) {$\lambda_0(i)$};

\draw[->] (E)--(F) node [midway,below] {$\lambda_{jk}$};
\draw[->] (D)--(E) node [midway,left] {$\lambda_{ij}$};
\draw[->] (D)--(F) node [midway,right] {$\ \lambda_{ik}$};

\end{tikzpicture}
\end{center}
If $\B X, \B Y$ are in $\SetIneq$, the \textit{constant} $\B I$-\textit{family of sets}\index{constant family of sets}
$\B X$\index{$C^X$} is the pair 
$\B C^X := (\B \lambda_0^X, \lambda_1^X)$, where $\lambda_0 (i) := X$, for every 
$i \in I$, and $\lambda_1 (i, j) := \id_X$, for every $(i, j) \in D(I)$. 
The $\B 2$-family $\boldmath \Lambda^{\D 2}(\B X, \B Y)$ of $\B X$ and $\B Y$ is defined by $\lambda_0 (0) := \B X$, 
$\lambda_0 (1) := \B Y$, $\lambda_{00} := \id_X$ and $\lambda_{11} := \id_Y$.
\end{definition}

If $i =_I j$, then $(\lambda_{ij}, \lambda_{ji}) \colon \B \lambda_0(i) =_{\D V_0^{\neq}} \B \lambda_0(j)$.
Next we describe the Sigma-set (or the exterior union, or the disjoint union) and the Pi-set (or the set of dependent 
functions) of a given family of sets with an inequality.

\begin{definition}\label{def: sigmaset}
Let $\boldmath \Lambda := (\lambda_0, \lambda_1)$ be an $\B I$-family of sets  with an inequality. Its 
Sigma-set
$$\sum_{I}\Lambda := \bigg(\sum_{i \in I}\lambda_0 (i), =_{\mathsmaller{\sum_{i \in I}\lambda_0 (i)}}, 
\neq_{\mathsmaller{\sum_{i \in I}\lambda_0 (i)}}\bigg) \in \SetIneq$$
is defined by
\[ w \in \sum_{i \in I}\lambda_0 (i) : 
\TOT \exists_{i \in I}\exists_{x \in \lambda_0 (i)}\big(w := (i, x)\big), \]
\[ (i, x) =_{\mathsmaller{\sum_{i \in I}\lambda_0 (i)}} (j, y) : \TOT i =_I j \ \& \ \lambda_{ij} (x) 
=_{\lambda_0 (j)} y,\]
\[(i,x) \neq_{\mathsmaller{\sum_{i \in I}\lambda_0 (i)}} (j, y) :\TOT i \neq_I j \ \vee \ \big(i =_I j \ \& \ 
\lambda_{ij}(x) \neq_{\lambda_0(j)} y \big). \]
The Sigma-set of the $\B 2$-family $\Lambda^{\D 2}(\B X, \B Y)$ of $\B X$ and $\B Y$
is their \textit{coproduct}\index{coproduct}.
The \textit{first projection}\index{first projection} on
$\sum_{i \in I}\lambda_0 (i)$ is the $($non-dependent$)$ operation\footnote{The global projection operations $\prb_1$ and $\prb_2$
are primitive operations in $\BST$.}
$$\pr_1^{\boldmath \Lambda} \colon \sum_{i \in I}\lambda_0 (i) \sto I, \ \ \ \  
\pr_1^{\boldmath \Lambda} (i, x) : = \prb_1 (i, x) := i, \ \ \ \ (i, x) \in \sum_{i \in I}\lambda_0 (i).$$
The
\textit{second projection}\index{second projection} on
$\sum_{i \in I}\lambda_0 (i)$ is the dependent operation\index{$\pr_2^{\Lambda}$}
$$\pr_2^{\boldmath \Lambda} \colon \bigcurlywedge_{(i,x) \in \sum_{i \in I}\lambda_0 (i)}\lambda_0(i),
\ \ \ \ \pr_2^{\boldmath \Lambda}(i,x) := \prb_2(i,x) := x, \ \ \ \ (i, x) \in \sum_{i \in I}\lambda_0 (i).$$
We write $\pr_1, \pr_2$, if $\boldmath \Lambda$ is clearly understood from the context.
The Pi-set of $\boldmath \Lambda$ 
$$\prod_{I}\Lambda := \bigg(\prod_{i \in I}\lambda_0 (i), =_{\mathsmaller{\prod_{i \in I}\lambda_0 (i)}}, 
\neq_{\mathsmaller{\prod_{i \in I}\lambda_0 (i)}}\bigg) \in \SetIneq$$
is defined by
$$\Theta \in \prod_{i \in I}\lambda_0(i) :\TOT \Theta \in \C D \C O(I, \lambda_0) \ \& \ 
\forall_{(i,j) \in D(I)}\big(\Theta_j 
=_{\lambda_0(j)} \lambda_{ij}(\Theta_i)\big),$$
and $=_{\mathsmaller{\prod_{i \in I}\lambda_0 (i)}}$ and $\neq_{\mathsmaller{\prod_{i \in I}\lambda_0 (i)}}$ are
the the canonical pointwise equality and inequality, respectively, inherited from 
$\C D \C O(I, \lambda_0)$.
If $\B X$ is in $\SetIneq$ and $C^{\B X}$ is the constant $\B I$-family $\B X$,
let
\[ \B X^{\B I} := \bigg(X ^I := \prod_{i \in I}X, =_{X^I}, \neq_{X^I}\bigg). \]

\end{definition}

Clearly, if $i =_I j$ and $\Theta \in \prod_{i \in I}\lambda_0(i)$, then $(i, \Theta_i) 
=_{\mathsmaller{\sum_{i \in I}\lambda_0 (i)}} (j, \Theta_j)$. 
Although the canonical inequality $=_{\mathsmaller{\sum_{i \in I}\lambda_0 (i)}}$ is an inequality on 
$\big(\sum_{i \in I}\lambda_0 (i), \neq_{\mathsmaller{\sum_{i \in I}\lambda_0 (i)}}\big)$, with respect to which 
$\pr_1^{\boldmath \Lambda}$ is a strongly extensional function, we need extra assumptions 
to make it an apartness relation. The proof of the next proposition makes heavy use of negation, something that 
we shall avoid in the subsequent sections, where inequalities induced by functions will be considered.


\begin{proposition}\label{prp: sigmaset1}
Let $\boldmath \Lambda := (\lambda_0, \lambda_1)$ be an $\B I$-family of sets with an inequality.
If $\B I$ is discrete, and $\neq_{\lambda_0(i)}$ is an apartness relation 
on $\lambda_0(i)$, for every 
$i \in I$,
then $ \neq_{\mathsmaller{\sum_{i \in I}\lambda_0 (i)}}$ 
is an apartness relation.
If $\B \lambda_0(i)$ 
is discrete, for every $i \in I$, then $\sum_{I}\Lambda$
is discrete, and if $\neq_I$ is tight and $\neq_{\lambda_0(i)}$ is tight, for every 
$i \in I$, then $\neq_{\mathsmaller{\sum_{i \in I}\lambda_0 (i)}} $ is tight.
 
\end{proposition}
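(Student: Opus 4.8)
The plan is to treat the three assertions separately, prefacing the first with two observations that carry its weight. The first observation is that a \emph{discrete} inequality is automatically a tight apartness relation: using $(\Ineq_6)$ one decides each relevant equality, and the equality-horn is always discarded by $(\Ineq_1)$, which yields $(\Ineq_4)$, $(\Ineq_5)$ and $(\Ineq_3)$ at once. In particular, since $\B I$ is discrete, $\neq_I$ is a (tight) apartness relation, so its symmetry and cotransitivity are at my disposal. The second observation concerns the transport maps when $i =_I j$: by conditions (a) and (b) of Definition~\ref{def: famofsets} the maps $\lambda_{ij}$ and $\lambda_{ji}$ are mutually inverse, and both lie in $\D F^{\neq}$, i.e. are strongly extensional. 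A strongly extensional map \emph{reflects} its inequality by definition; here it also \emph{preserves} it, since from $u \neq_{\lambda_0(i)} v$ the inverse identities together with the extensionality of the fibre apartness (Remark~\ref{rem: apartness1}) give $\lambda_{ji}(\lambda_{ij}(u)) \neq_{\lambda_0(i)} \lambda_{ji}(\lambda_{ij}(v))$, whence the strong extensionality of $\lambda_{ji}$ delivers $\lambda_{ij}(u) \neq_{\lambda_0(j)} \lambda_{ij}(v)$.

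With these in hand the first assertion is a case analysis on the disjunction defining the Sigma-inequality. For symmetry, suppose $(i,x)\neq(j,y)$ in $\sum_I\Lambda$. If $i\neq_I j$, then $j\neq_I i$ by $(\Ineq_4)$ for $\neq_I$, so $(j,y)\neq(i,x)$. If instead $i=_I j$ and $\lambda_{ij}(x)\neq_{\lambda_0(j)}y$, preservation through $\lambda_{ji}$ gives $\lambda_{ji}(\lambda_{ij}(x))\neq_{\lambda_0(i)}\lambda_{ji}(y)$; rewriting the left side as $x$ by the inverse identity and extensionality and applying $(\Ineq_4)$ in the fibre yields $\lambda_{ji}(y)\neq_{\lambda_0(i)}x$, the fibre-horn of $(j,y)\neq(i,x)$. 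For cotransitivity, let $(i,x)\neq(j,y)$ and let $(k,z)$ be arbitrary. If $i\neq_I j$, cotransitivity of $\neq_I$ gives $k\neq_I i$ or $k\neq_I j$, hence $(k,z)\neq(i,x)$ or $(k,z)\neq(j,y)$. If $i=_I j$ and $\lambda_{ij}(x)\neq_{\lambda_0(j)}y$, decide $k=_I i$ or $k\neq_I i$ by discreteness of $\B I$; the latter gives $(k,z)\neq(i,x)$. In the former case also $k=_I j$, and cotransitivity of $\neq_{\lambda_0(j)}$ applied to the apart pair $\lambda_{ij}(x), y$ with probe $\lambda_{kj}(z)$ gives either $\lambda_{kj}(z)\neq_{\lambda_0(j)}y$, i.e. $(k,z)\neq(j,y)$, or $\lambda_{kj}(z)\neq_{\lambda_0(j)}\lambda_{ij}(x)$; in the last case the cocycle $\lambda_{kj}=\lambda_{ij}\circ\lambda_{ki}$ (condition (b), as $k=_I i=_I j$) and the strong extensionality of $\lambda_{ij}$ give $\lambda_{ki}(z)\neq_{\lambda_0(i)}x$, i.e. $(k,z)\neq(i,x)$.

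The remaining two assertions are short. For the discreteness of $\sum_I\Lambda$, given $(i,x),(j,y)$, I would first decide $i=_I j$ or $i\neq_I j$ — this uses discreteness of $\B I$, which I take to be in force — and in the case $i\neq_I j$ conclude $(i,x)\neq(j,y)$, while in the case $i=_I j$ discreteness of $\lambda_0(j)$ decides $\lambda_{ij}(x)=_{\lambda_0(j)}y$ or $\lambda_{ij}(x)\neq_{\lambda_0(j)}y$, giving $(i,x)=(j,y)$ or $(i,x)\neq(j,y)$ respectively. For tightness, assume $\neg\big((i,x)\neq(j,y)\big)$; this negates both horns, so $\neg(i\neq_I j)$ and $\neg\big(i=_I j\ \&\ \lambda_{ij}(x)\neq_{\lambda_0(j)}y\big)$. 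Tightness of $\neq_I$ turns the first into $i=_I j$, and feeding this into the second yields $\neg\big(\lambda_{ij}(x)\neq_{\lambda_0(j)}y\big)$; tightness of $\neq_{\lambda_0(j)}$ then gives $\lambda_{ij}(x)=_{\lambda_0(j)}y$, so that $(i,x)=(j,y)$.

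The crux is the first assertion, and within it the two cross-fibre manoeuvres: preserving apartness along the inverse transport map in the symmetry proof, and pushing a fibre apartness back along $\lambda_{ij}$ in the cotransitivity proof. Both depend essentially on combining the cocycle condition (b) with the strong extensionality of the transport maps and, for preservation, the extensionality supplied by Remark~\ref{rem: apartness1}; once that machinery is in place the case analysis is routine. The only other point demanding care is the hidden need for $\B I$ to be discrete in the discreteness assertion, since the fibre condition alone cannot settle the index comparison; I would make this hypothesis explicit.
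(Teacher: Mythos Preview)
Your proof is correct and follows essentially the same route as the paper's: the same case analyses, the same use of discreteness of $\B I$ to decide the index comparison, and the same reliance on the cocycle condition, strong extensionality of the transport maps, and the extensionality of fibre apartness (Remark~\ref{rem: apartness1}). The only cosmetic difference is in the cotransitivity step for the fibre case: the paper transports the apart pair $\lambda_{ij}(x),y$ to $\lambda_0(k)$ and probes there with $z$, whereas you transport the probe $z$ to $\lambda_0(j)$ via $\lambda_{kj}$ and apply cotransitivity there---both are equally valid rearrangements of the same ingredients.
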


\begin{proof}
The condition $(\Ineq_1)$ of Definition~\ref{def: ineq} is trivially satisfied. To show condition
$(\Ineq_4)$, we suppose
first that $i \neq_I j$, hence by the corresponding condition of $\neq_I$ we get $(j, y) 
\neq_{\mathsmaller{\sum_{i \in I}\lambda_0 (i)}} (i, x)$. If $i =_I j \ \& \ 
\lambda_{ij}(x) \neq_{\lambda_0(j)} y$, we show that $\lambda_{ji}(y) \neq_{\lambda_0(i)} x$. 
By the extensionality of $\neq_{\lambda_0(j)}$ 
(Remark~\ref{rem: apartness1}) 
the inequality $\lambda_{ij}(x) \neq_{\lambda_0(j)} y$ implies the inequality
$\lambda_{ij}(x) \neq_{\lambda_0(j)} \lambda_{ij}\big(\lambda_{ji}(y)\big)$, and since 
$\lambda_{ij}$ is strongly extensional, we get $x \neq_{\lambda_0(i)} \lambda_{ji}(y)$. To show 
condition $(\Ineq_5)$, let $(i,x) \neq_{\mathsmaller{\sum_{i \in I}\lambda_0 (i)}} (j, y)$, and let 
$(k, z) \in \sum_{i \in I}\lambda_0 (i)$. If $i \neq_I j$, then by condition $(\Ineq_5)$ of $\neq_I$ we get
$k \neq_I i$, or $k \neq_I j$, hence $(k,z) \neq_{\mathsmaller{\sum_{i \in I}\lambda_0 (i)}} (i, x)$, or
$(k,z) \neq_{\mathsmaller{\sum_{i \in I}\lambda_0 (i)}} (j, y)$. Suppose next $i =_I j \ \& \ 
\lambda_{ij}(x) \neq_{\lambda_0(j)} y$. As $\B I$ is discrete, $k \neq_I i$, or $k =_I i =_I j$.
If $k \neq_I i$, then what we want to show follows immediately. If $k =_I i =_I j$, then by the extensionality of 
$\neq_{\lambda_0(j)}$ and the strong extensionality of the transport map
$\lambda_{kj}$ we have that
\[ \lambda_{ij}(x) \neq_{\lambda_0(j)} y \To \lambda_{kj}\big(\lambda_{ik}(x)\big) \neq_{\lambda_0(j)} 
 \lambda_{kj}\big(\lambda_{jk}(y)\big) \To \lambda_{ik}(x) \neq_{\lambda_0(k)} \lambda_{jk}(y). \]
Hence, by condition $(\Ineq_5)$ of $\neq_{\lambda_0(k)}$ we get $\lambda_{ik}(x) \neq_{\lambda_0(k)} z$, or
$\lambda_{jk}(y) \neq_{\lambda_0(k)} z$, hence $(i,x) \neq_{\mathsmaller{\sum_{i \in I}\lambda_0 (i)}} (k, z)$, or
$(j,y) \neq_{\mathsmaller{\sum_{i \in I}\lambda_0 (i)}} (k, z)$. Let 
$\overline{\lambda}_0(i)$ be discrete, for every $i \in I$. We show that 
$(i,x) =_{\mathsmaller{\sum_{i \in I}\lambda_0 (i)}} (j, y)$, namely $i =_I j$ and $\lambda_{ij}(x) =_{\lambda_0(i)} y$, or 
$(i,x) \neq_{\mathsmaller{\sum_{i \in I}\lambda_0 (i)}} (j, y)$, that is 
$i \neq_I j$ or $i =_I j \ \& \ \lambda_{ij}(x) \neq_{\lambda_0(j)} y$. As $\B I$ is discrete, 
$i =_I j$, or $i \neq_I j$. In the first case, and since $\overline{\lambda}_0(j)$ is discrete, we get
$\lambda_{ij}(x) =_{\lambda_0(j)} y$ or 
$\lambda_{ij}(x) \neq_{\lambda_0(j)} y$, and what we want follows immediately. If $i \neq_I j$, we get
$(i,x) \neq_{\mathsmaller{\sum_{i \in I}\lambda_0 (i)}} (j, y)$.
Finally, we suppose that $\neq_I$ is tight and $\neq_{\lambda_0(i)}$ is tight, for every 
$i \in I$. Let $\neg\big[(i,x) \neq_{\mathsmaller{\sum_{i \in I}\lambda_0 (i)}} (j, y)\big]$, that is 
\[ \big[i \neq_I j \ \vee \ \big(i =_I j \ \& \ \lambda_{ij}(x) \neq_{\lambda_0(j)} y \big)\big] \To \bot. \]
From this hypothesis we get the conjunction\footnote{Here we use the logical implication
$\big((\phi \vee  \psi) \To \bot\big) \To [(\phi \To \bot) \ \& \ (\psi \To \bot)]$.}
\[ \big[i \neq_I j \To \bot\big] \ \ \& \ \ \big[\big(i =_I j \ \& \ \lambda_{ij}(x) \neq_{\lambda_0(j)} y \big) \To
\bot\big]. \]
By the tightness of $\neq_I$ we get $i =_I j$. The formula 
$\big(i =_I j \ \& \ \lambda_{ij}(x) \neq_{\lambda_0(j)} y \big) \To
\bot$ 
trivially implies 
$(i =_I j) \To  \big(\lambda_{ij}(x) \neq_{\lambda_0(j)} y \To \bot\big)$,
and as
its premiss $i =_I j$ is derived by the tightness of $\neq_I$, 
we get  
$\lambda_{ij}(x) \neq_{\lambda_0(j)} y \To \bot$. Since $\neq_{\lambda_0(i)}$ is tight, we conclude that 
$\lambda_{ij}(x) =_{\lambda_0(j)} y$, hence $(i, x) =_{\mathsmaller{\sum_{i \in I}\lambda_0 (i)}} (j, y)$.
\end{proof}

Notice that the above inequality on the Sigma-set of a family $\boldmath \Lambda$ does not give the canonical
inequality of the product when the constant family is considered. This will be resolved in section~\ref{sec: fineqs}
with the use of global families of completely separated sets. 
An abstract inequality induces a notion of disjoint subsets and
a complemented subset is a pair of disjoint subsets, that is a subset with a given ``complement''.
Complemented subsets are first-class citizens of Bishop-Cheng measure theory in~\cite{BC72, BB85}, a 
constructive version of Daniell's approach to measure theory, and of Bishop measure theory in~\cite{Bi67} 
(see~\cite{Pe19b, Pe22e, PW22, PZ22}).

\section{Equalities and inequalities induced by functions}
\label{sec: fineqs}


A completely positive notion of inequality is implicitly 
used in the definition of a complemented subset
in~\cite{Bi67}, p.~66. This is the inequality induced by a set of real-valued functions on a
given set. This concept, together with the corresponding notion of equality 
induced by such a set of functions, are the starting point of our study. 
\textit{Unless otherwise stated, in the rest of the paper $(X, =_X), (Y, =_Y), (I, =_I)$ are sets and $F, G, K$ 
are extensional subsets of $\D F(X), \D F(Y)$ and $\D F(I)$, respectively}\footnote{That is
there is an extensional property $P_F$ on $\D F(X)$ such that
$$F := \big\{f \in \D F(X) \mid P_F(f)\big\},$$
where the extensionality of $P_F$ is the property
$f =_{\D F(X)} g \ \& \ P_F(f) \To P_F(g)$, for every $f, g \in \D F(X)$. The
use of extensional subsets, instead of arbitrary subsets given
by their embeddings, is crucial to the definition of affine arrows in Definition~\ref{def: fsets}, 
as the expression $f \in F$,
which is in principle a judgment, in Martin-L\"of's sense, is replaced by the formula $P_F(f)$ (see also~\cite{Pe22d}).}.

\begin{definition}\label{def: fapartness}
Let $\B X$ be in $\SetIneq$.
The canonical equality $x =_{(X, F)} x{'}$ on $X$ induced by $F$ is defined by
$$x =_{(X,F)} x{'} :\TOT \forall_{f \in F}\big(f(x) =_{\Real} f(x{'})\big),$$
for every $x, x{'} \in X$, and the canonical inequality $x \neq_{(X, F)} x{'}$ on $X$ induced by $F$ is defined by
$$x \neq_{(X,F)} x{'} :\TOT \exists_{f \in F}\big(f(x) \neq_{\Real} f(x{'})\big),$$
for every $x, x{'} \in X$. We write $f \colon x \neq_{(X,F)} x{'}$ to denote that $f \in F$ witnesses the
inequality $x \neq_{(X,F)} x{'}$.
The induced inequality $x \neq_{(X, F)} x{'}$ is equal to the given inequality $\neq_X$ of $X$, if
$$x \neq_X x{'} \TOT x \neq_{(X, F)} x{'},$$
for every $x, x{'} \in X$.
%
The inequality $\neq_{(X,F)}$ is called  tight, if,  
 $$x =_{(X,F)} x{'} \To x =_X x{'},$$
 for every $x, x{'} \in X$, and in this case we call $F$ a separating set of functions on $X$, or we say that 
 $F$ separates the points of $X$. 
 If $\B Y$ in $\SetIneq$,
 and $\Phi$ is an extensional subset of $\D F(X, Y)$, the canonical $($in$)$equalities $(\neq_{(X,\Phi)}) =_{(X,\Phi)}$
 on $X$ induced by $\Phi$,
 and the  tightness of $\neq_{(X, \Phi)}$ are defined in a similar way. 
\end{definition}



%
\begin{remark}\label{rem: f1}
Let $(X, =_X, \neq_X)$ be a set with an inequality.\\[1mm]
\normalfont (i) 
\itshape For every $x, x{'} \in X$ we have that $x =_X x{'} \To x =_{(X, F)} x{'}$.\\[1mm]
\normalfont (ii) 
\itshape If $F$ separates the points of $X$, the equality $=_X$ is equal to the induced equality  $=_{(X, F)}$
on $X$.\\[1mm]
\normalfont (iii) 
\itshape The inequality $\neq_{(X, F)}$ is tight with respect to the equality $=_{(X, F)}$ on $X$.\\[1mm]
\normalfont (iv) 
\itshape The canonical inequality $x \neq_{(X, F)} x{'}$ on $X$ induced by $F$ is an apartness relation.\\[1mm]
\normalfont (v) 
\itshape If $\neq_X$ is equal to $\neq_{(X, F)}$, then $\neq_X$ is an apartness relation.\\[1mm]
\normalfont (vi) 
\itshape  $\neq_{(X, F)}$ is tight, according to Definition~\ref{def: fapartness}, if and only if it is 
tight according to Definition~\ref{def: ineq}.\\[1mm]
\normalfont (vii) 
\itshape If each $f \in F$ is strongly extensional, then $x \neq_{(X, F)} x{'} \To x \neq_X x{'}$, 
for every $x, x{'} \in X$.
\end{remark}

\begin{proof}
Cases (i)-(iii) are trivial, and (v) follows (iv). The proof of
(iv) relies on the fact that the canonical inequality $\neq_{\Real}$ is an apartness relation. 
For the proof of (vi), suppose that $x =_{(X, F)} x{'} \To x =_X x{'}$, and let $\neg(x \neq_{(X, F)} x{'})$. Then 
$x =_{(X, F)} x{'}$, using the tightness of $\neq_{\Real}$: if $f \in F$ with $f(x) \neq_{\Real} f(x{'})$,
then by our hypothesis we get a contradiction, hence $\neg(\big(f(x) \neq_{\Real} f(x{'})\big)$. By the tightness 
of $\neq_{\Real}$ we get $f(x) =_{\Real} f(x{'})$, and we get the required equality $x =_X x{'}$. For the converse 
implication, let $\neg(x \neq_{(X, F)} x{'}) \To x =_X x{'}$, and $x =_{(X, F)} x{'}$. To show $x =_X x{'}$,
it suffices to 
show $\neg(x \neq_{(X, F)} x{'})$. Clearly, the hypothesis $x \neq_{(X,F)} x{'}$ together with the assumption $x =_{(X, F)} x{'}$
give the 
required contradiction. The proof of case (vii) is immediate.
\end{proof}

By Remark~\ref{rem: f1}(i), the 
given equality 
$x =_X x{'}$ is ``smaller''
than every induced equality $x =_{(X, F)} x{'}$. 
The inequality $\neq_{(X,F)}$ is an inequality, but no negation of some sort is used in its definition,
and the proof of its extensionality avoids negation, as it relies on the extensionality of $\neq_{\Real}$.
By its extensionality,
$\neq_{(X,F)}$ provides a fully positive definition of the 
\textit{extensional empty subset} of $X$ induced by $F$, namely
$$ \emptys_{(X, F)} := \big\{x \in X \mid x \neq_{(X,F)} x \big\}.$$
With the help of the extensional empty subset a positive definition of the property ``a subset is empty'', defined
negatively as ``it is not inhabited'' in~\cite{BR87}, p.~8, is possible. Following~\cite{PW22},
if $(A, i_A) \subseteq X$, we call $A$ empty, if $A \subseteq \emptys_{(X, F)}$ in $\C P(X)$.
Similarly, the tightness of $\neq_{(X,F)}$ is formulated positively. Notice that the contrapositive of tightness,
$\neg(x =_X x{'}) \To \exists_{f\in F}\big(f(x) \neq_{\Real} f(x{'})\big)$,
is the standard, classical property of separation of points $x, x{'} \in X$ from $F$.
If $F$ is a Bishop topology of functions 
(see~\cite{Pe15,Pe20b,Pe21}), then $x =_{(X,F)} x{'}$ and $x \neq_{(X,F)} x{'}$ are the canonical equality and 
inequality  of a Bishop space, respectively.
The inequality $a \neq_{\Real} b$ is equal to $\neq_{(\Real, \Bic(\Real))}$, 
where $\Bic(\Real)$ is the topology of Bishop-continuous functions of type $\Real \to \Real$ 
(see~\cite{Pe15}, Proposition 5.1.2, where even pointwise continuous functions are shown to separate the
reals). But we can go even further, noticing that the following extensional subset of $\D F(\Real)$ 
$$\{\id_{\Real}\} := \big\{f \in \D F(\Real) \mid f =_{\D F(\Real)} \id_{\Real}\big\}$$
separates the points of $\Real$. If $(Z, d)$ is a metric space, let the set 
$$U_0(Z) := \{d_{z} \in \D F(Z) \mid z \in Z\},$$
where $d_z (z{'}) := d(z, z{'})$, for every $z{'} \in Z$. Then $z \neq_{(Z, d)} z{'} \TOT z \neq_{U_0(Z)} z{'}$,
for every $z, z{'} \in Z$. The family $\Const(X)$ of constant real-valued functions on $X$ is not separating,
in general. The following remark is straightforward to show.

 \begin{remark}\label{rem: f2}
Let $(X, =_X, \neq_X)$, $F, F{'}$ extensional subsets of $\D F(X)$ with $F \subseteq F{'}$, and $x, x{'} \in X$.\\[1mm]
\normalfont (i) 
\itshape $x =_{(X, F{'})} x{'} \To x =_{(X, F)} x{'}$.\\[1mm]
\normalfont (ii) 
\itshape $x \neq_{(X, F)} x{'} \To x \neq_{(X, F{'})} x{'}$.\\[1mm]
\normalfont (iii) 
\itshape If $\neq_{(X, F)}$ is tight, then $\neq_{(X, F{'})}$ is tight.\\[1mm]
\normalfont (iv) 
\itshape $ \emptys_{(X, F)} \subseteq  \emptys_{(X, F{'})}$.
\end{remark}

Hence, $=_{(X, \D F(X))}$ is the ``smallest'' equality on $X$ induced by real-valued 
functions on $(X, =_X)$, and $\neq_{(X, \D F(X))}$ is the largest inequality on $X$ induced by such functions.

 \section{Sets completely separated by functions}
\label{sec: scs}

 Next we introduce the category $\SetComplSep$ of sets completely separated by functions.
 Roughly speaking, a set with an inequality
 $(X, =_X, \neq_X)$ is completely separated, if there is an extensional subset $F$ of $\D F(X)$ such that 
 $=_X$ and $\neq_X$ ``are'' the ones induced by $F$.
 Consequently, $\neq_X$ is a tight apartness relation, and this fact is formulated in a completely 
 positive, negation-free framework.
 In order to avoid quantification over the powerset of $\D F(X)$, which is a
 proper class, we take $F$ to be part of the defining data in Definition~\ref{def: fsets}\footnote{In~\cite{PW22}
 an $\f$-inequality $\neq_X$ was defined impredicatively by the existence of an extensional subset 
 $F$ of $\D F(X)$, such that $\neq_X$ is equivalent to $\neq_{(X, F)}$.}.

\begin{definition}\label{def: fsets}
 A completely separated set is a structure $(\B X ; F)$, where $\B X := (X, =_X, \neq_X)$ is in
 $\SetIneq$ and $F$ is an  extensional  subset of $\D F(X)$, such that $\neq_X$ is equal to $\neq_{(X, F)}$ and 
 $\neq_{(X, F)}$ is  tight\footnote{Hence, $=_X$ is also equal to the induced equality $=_{(X, F)}$ on $X$.
 Moreover, $\neq_X$ is tight, but as the tightness of  $\neq_X$ is negativistic, we prefer the positive 
 formulation of the tightness  of $\neq_{(X, F)}$ in the definition of a completely separated set.}.
 We call a function $h \colon (\B X ; F) \to (\B Y ; G)$ affine, or an
 affine arrow, if $g \circ h \in F$, for every $g \in G$
 \begin{center}
\begin{tikzpicture}

\node (E) at (0,0) {$X$};
\node[right=of E] (F) {$Y$};
\node[below=of F] (A) {$\Real$,};

\draw[->] (E)--(F) node [midway,above] {$h$};
\draw[->] (E)--(A) node [midway,left] {$F \ni g \circ h \ $};
\draw[->] (F)--(A) node [midway,right] {$g \in G$};

\end{tikzpicture}
\end{center}
that is $P_F(g \circ h)$, for every $g \in \D F(Y)$ such that $P_G(g)$.
Let $\fV$ be the proper class of completely separated sets, equipped with the equality of $\D V_0^{\neq}$, and  
let $\SetComplSep$ be the full subcategory of $\SetIneq$ of completely separated sets. The category of affine sets
$\SetAffine$ is the subcategory of $\SetComplSep$ with the same objects and only the affine arrows between them.
\end{definition}

By the extensionality of $F$ we get the extensionality of $Q(h) :\TOT \forall_{g \in G}\big(g \circ h \in F\big)$
on $\D F(X, Y)$, hence by separation we define the set of affine arrows from $(\B X ; F)$ to $(\B Y ; G)$, that is
$$\Aff\big((\B X ; F), (\B Y ; G)\big) := \big\{h \in \D F(X, Y) \mid Q(h)\big\}.$$
Following the previous section,
$\big(\B R ; \{\id_{\Real}\}\big)$ and
$\big(\B Z ; U_0(Z)\big)$ are completely separated.
Actually, if $(\B X ; F)$ is in $\SetComplSep$, every element $f$ of $F$ is an affine arrow from 
$(\B X ; F)$ to $\big(\B R ; \{\id_{\Real}\}\big)$.
Clearly, an affine arrow between completely separated sets is a strongly extensional function, or an arrow
in $\SetComplSep$. By Markov's principle every function from $\Real$ to 
$\Real$ is strongly extensional, but not necessarily affine. Using intuitionisitic logic, and 
supposing that $F$ does not include the constant functions on $X$, a constant function from $X$ to $Y$ is strongly 
extensional, but not affine.

\begin{proposition}\label{prp: fcl1}
Let $(\B X ; F) := (X, =_X, \neq_X; F)$ and $(\B Y ; G) := (Y, =_Y, \neq_Y; G)$ be in $\SetComplSep$, and
let $(A, i_A) \subseteq X$.\\[1mm]
\normalfont (i) 
\itshape The product $(\B X \times \B Y ; F \otimes G) := (X \times Y, =_{X \times Y}, \neq_{X} \mathsmaller{\otimes} \neq_Y; F \otimes G)$ is a 
completely separated set, where
$$(x, y) \neq_{X} \mathsmaller{\otimes} \neq_Y (x{'}, y{'}) :\TOT x \neq_{(X,F)} x{'} \vee y \neq_{(Y,G)} y{'} 
\TOT (x, y) \neq_{\mathsmaller{(X \times Y, F \otimes G)}} (x{'}, y{'}),$$
$$F \otimes G := \{f \circ \pr_X \mid f \in F\} \cup \{g \circ \pr_Y \mid g \in G\}.$$
The projections $\pr_X \colon (\B X \times \B Y ; F \otimes G) \to 
(\B X ; F)$
and 
$\pr_Y \colon (\B X \times \B Y ; F \otimes G) \to 
(\B Y ; G)$ are affine arrows\footnote{Hence, $(\B X \times \B Y ; F \otimes G)$ is the product in $\SetAffine$.}.\\[1mm]
\normalfont (ii) 
\itshape The function set $\big(\D F(X, Y), =_{\D F(X, Y)}, \neq_{\D F(X, Y)}; F \to G \big)$ is a 
completely separated set, 
where
$$h \neq_{\D F(X, Y)} h{'} :\TOT \exists_{x \in X}\big(h(x) \neq_{(Y,G)} h{'}(x)\big)  
\TOT h \neq_{\mathsmaller{(\D F(X,Y), F \to G)}} h{'},$$
$$F \to G := \{\phi_{x,g} \mid x \in X \ \& \ g \in G\},$$
$$\phi_{x,g} \colon \D F(X, Y) \to \Real, \ \ \ \phi_{x,g}(h) := g(h(x)), \ \ \ h \in  \D F(X, Y).$$ 
\normalfont (iii) 
\itshape $\big(A, =_A, \neq_A; F \circ i_A \big)$ is completely separated, where 
$a \neq_{A} a{'} :\TOT i_A(a) \neq_{(X,F)} i_A(a{'}) \TOT a \neq_{\mathsmaller{F \circ i_A}} a{'},$
$$F \circ i_A := \{f \circ i_A \mid f \in F\},$$
and $i_A \colon (A, =_A, \neq_{A}; F \circ i_A) \to (X, =_X, \neq_X; F)$ is an affine arrow. 
\end{proposition}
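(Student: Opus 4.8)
The plan is to handle the three constructions in turn, and in each case the work splits into the same three tasks: show that the displayed inequality coincides with the inequality $\neq_{(\,\cdot\,,\,\cdot\,)}$ induced by the exhibited set of functions, establish the tightness of that induced inequality, and check affineness of the projections (respectively of the embedding). Throughout I would use only that $\neq_{\Real}$ is a tight apartness relation together with Remark~\ref{rem: f1}, so that, as the defining data of $(\B X;F)$ and $(\B Y;G)$ demand, no negation enters. Recall from Definition~\ref{def: fsets} that to certify a structure as completely separated it suffices to verify these first two tasks, since the agreement of equalities then follows automatically.

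For part (i) I would unfold $\neq_{(X \times Y, F \otimes G)}$ by cases on a witness $h \in F \otimes G$: if $h = f \circ \pr_X$ with $f \in F$ then $h(x,y)=f(x)$ and $h(x',y')=f(x')$, while if $h = g \circ \pr_Y$ with $g \in G$ then $h(x,y)=g(y)$ and $h(x',y')=g(y')$. Hence $\exists_{h \in F \otimes G}\big(h(x,y) \neq_{\Real} h(x',y')\big)$ is equivalent to $\exists_{f \in F}\big(f(x)\neq_{\Real} f(x')\big) \vee \exists_{g \in G}\big(g(y) \neq_{\Real} g(y')\big)$, i.e. to $x \neq_{(X,F)} x' \vee y \neq_{(Y,G)} y'$, which is the displayed $\neq_X \otimes \neq_Y$. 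Dually, $(x,y) =_{(X \times Y, F \otimes G)} (x',y')$ unfolds to $x =_{(X,F)} x'$ and $y =_{(Y,G)} y'$, and tightness of $\neq_{(X,F)}$ and of $\neq_{(Y,G)}$ then yield $x =_X x'$ and $y =_Y y'$, that is $(x,y) =_{X \times Y} (x',y')$. Affineness of $\pr_X$ and $\pr_Y$ is immediate, since $f \circ \pr_X$ and $g \circ \pr_Y$ lie in $F \otimes G$ by its very definition.

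Parts (ii) and (iii) follow the same pattern against the relevant function family. For (ii), evaluating $\phi_{x,g}$ turns $\exists_{\phi \in F \to G}\big(\phi(h) \neq_{\Real} \phi(h')\big)$ into $\exists_{x \in X}\exists_{g \in G}\big(g(h(x)) \neq_{\Real} g(h'(x))\big)$, whose inner $\exists_{g \in G}$ is exactly $h(x) \neq_{(Y,G)} h'(x)$, recovering the displayed $\neq_{\D F(X,Y)}$; dually $h =_{(\D F(X,Y),\,F \to G)} h'$ unfolds to $\forall_{x \in X}\big(h(x) =_{(Y,G)} h'(x)\big)$, and tightness of $\neq_{(Y,G)}$ promotes this to pointwise equality in $Y$, i.e. $h =_{\D F(X,Y)} h'$. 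For (iii), a witness of $\neq_{(A,\,F \circ i_A)}$ is $f \circ i_A$ with $f \in F$, so $\exists_{f \in F}\big(f(i_A(a)) \neq_{\Real} f(i_A(a'))\big)$ equals $i_A(a) \neq_{(X,F)} i_A(a')$, the displayed $\neq_A$; tightness reduces $i_A(a) =_{(X,F)} i_A(a')$, via tightness of $\neq_{(X,F)}$, to $i_A(a) =_X i_A(a')$, which is the canonical subset equality $a =_A a'$. Affineness of $i_A$ is again read off from $F \circ i_A = \{f \circ i_A \mid f \in F\}$.

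The only step needing some care is one the statement presupposes rather than asserts: that $F \otimes G$, $F \to G$ and $F \circ i_A$ are genuinely \emph{extensional} subsets of the respective function sets, so that Definition~\ref{def: fsets} even applies. Each is presented as the image of an extensional family (a union of two images; the image of the family indexed by $X$ and $G$; the image of $F$), so I would verify extensionality of the corresponding defining property directly. This is routine bookkeeping rather than a genuine difficulty, and it is here, not in the inequality computations, that the only real attention is required.
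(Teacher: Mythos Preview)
Your proposal is correct and follows essentially the same approach as the paper: unfold the induced inequality by case analysis on the witnessing function, obtain the displayed equivalence, and derive tightness from the corresponding unfolding of the induced equality together with the tightness hypotheses on $\neq_{(X,F)}$ and $\neq_{(Y,G)}$. The paper does exactly this for (i) and (ii) and declares (iii) immediate; your additional remark about checking extensionality of $F \otimes G$, $F \to G$ and $F \circ i_A$ is a point the paper leaves implicit.
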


\begin{proof}
 We show only (i) and (ii), as the proof of (iii) is immediate. For (i), if $(x, y), (x{'}, y{'}) \in X \times Y$, then 
 \begin{align*}
  (x, y) \neq_{\mathsmaller{(X \times Y,F \otimes G)}} (x{'}, y{'}) & :\TOT \exists_{h \in F \otimes G}\big(h((x, y)) 
  \neq_{\Real} h((x{'}, y{'}))\big)\\
  & \TOT \exists_{f \in F}\big((f \circ \pr_X)((x, y))  \neq_{\Real} (f \circ \pr_X)((x{'}, y{'}))\big)
  \vee \\
  & \ \ \ \ \ \exists_{g \in G}\big((g \circ \pr_Y)((x, y))  \neq_{\Real} (g \circ \pr_Y)((x{'}, y{'}))\big)\\
  & \TOT:  x \neq_{(X,F)} x{'} \vee y \neq_{(Y,G)} y{'}.
 \end{align*}
The tightness of 
$\neq_{X} \mathsmaller{\otimes} \neq_Y$ follows from 
$(x,y) =_{\mathsmaller{(X \times Y,F \otimes G)}} (x{'}, y{'}) \TOT x =_{(X,F)} x{'} \ \&  \ y =_{(Y,G)} y{'}$ and the 
tightness of $\neq_F$ and $\neq_G$. The fact that the projections are affine arrows follows immediately.\\
(ii) If $h, h{'} \in \D F(X, Y)$, we have that
\begin{align*}
   h \neq_{\mathsmaller{(\D F(X,Y), F \to G)}} h{'} & :\TOT \exists_{x \in X}\exists_{g \in G}\big(\phi_{x,g}(h) 
  \neq_{\Real} \phi_{x,g}(h{'})\big)\\
  & \TOT \exists_{x \in X}\exists_{g \in G}\big(g(h(x)) \neq_{\Real} g(h{'}(x))\big)\\
  & \TOT \exists_{x \in X}\big(h(x) \neq_{(Y,G)} h{'}(x)\big)\\
  & \TOT:  h \neq_{\D F(X, Y)} h{'}.
 \end{align*}
 As $\neq_{(Y,G)}$ is tight, we get
 \begin{align*}
   \ \ \ \ \ \ \ \ \ \ \ \ \ \ \ \ \ \ \ \ \ \ \ \ \ \ \ \ \ \ \ \ \ \ \ \ \ \ \ \ \ \ \ \ \ \ 
   h =_{\mathsmaller{(\D F(X,Y), F \to G)}} h{'} & :\TOT \forall_{x \in X}\forall_{g \in G}\big(\phi_{x,g}(h) 
  =_{\Real} \phi_{x,g}(h{'})\big)\\
  & \TOT \forall_{x \in X}\forall_{g \in G}\big(g(h(x)) =_{\Real} g(h{'}(x))\big)\\
  & \TOT \forall_{x \in X}\big(h(x) =_{(Y,G)} h{'}(x)\big)\\
  & \To \forall_{x \in X}\big(h(x) =_Y h{'}(x)\big)\\
  & \TOT:  h =_{\D F(X, Y)} h{'}. \ \ \ \ \ \ \ \ \ \ \ \ \ \ \ \ \ \ \ \ \ \ \ \ \ \ \ \ \ \ \ \ \ \ \ \ \ \ \ 
     \qedhere
 \end{align*}
 \end{proof}

Next we define a family of completely separated sets indexed by a completely separated 
set $(\B I; K)$.

\begin{definition}\label{def: famoffsets}
If $(\B I ; K)$ is in $\SetComplSep$, 
a \textit{family of completely separated sets}\index{family of sets} indexed by $(\B I; K)$
is a structure $\B S := (\lambda_0, \lambda_1 ; \phi_0, \phi_1)$, where\index{$\lambda_0$}
$(\lambda_0, \lambda_1)$ is an $\B I$-family of sets with an inequality, 
$\phi_0 \colon I \sto \D V_0$ with $\phi_0(i) := F_i$, an extensional subset of 
$\D F(\lambda_0(i))$, and 
\[ \phi_1 \colon \bigcurlywedge_{(i, j) \in D(I)}\D F\big(F_i, F_j\big), 
\ \ \ \phi_1(i, j) =: \phi_{ij} \colon F_i \to F_j, \ \ \ (i, j) \in D(I), \]
such that the following conditions hold:\\[1mm]
\normalfont (a) 
\itshape $x_i \neq_{\lambda_0(i)} x_i{'} \TOT x_i \neq_{(\lambda_0(i), F_i)} x_i{'}$, for every $x_i, x_i{'} 
\in \lambda_0(i)$
and $i \in I$.\\[1mm]
\normalfont (b) 
\itshape $x_i =_{(\lambda_0(i), F_i)} x_i{'} \To x_i =_{\lambda_0(i)} x_i{'}$, for every $x_i, x_i{'} \in \lambda_0(i)$
and $i \in I$.\\[1mm]
\normalfont (c) 
\itshape If $i =_I j$, then the following triangle commutes
\begin{center}
\begin{tikzpicture}

\node (E) at (0,0) {$\lambda_0(j)$};
\node[right=of E] (F) {$\lambda_0(i)$};
\node[below=of F] (A) {$\Real$.};

\draw[->] (E)--(F) node [midway,above] {$\lambda_{ji}$};
\draw[->] (E)--(A) node [midway,left] {$ F_j \ni\phi_{ij}(f_i)   \ \ $};
\draw[->] (F)--(A) node [midway,right] {$f_i \in F_i$};

\end{tikzpicture}
\end{center}
We write $\big(\B \lambda_0(i) ; F_i\big)$ in $\SetComplSep$, 
for every $i \in I$.
\end{definition}

If $(\B X ; F)$ and $(\B Y ; G)$ are in $\SetComplSep$, it is easy 
to define the constant family 
$(\B X ; F)$ over $(\B I ; K)$ and the $\big(\B 2 ; \D F(\D 2)\big)$-family of $(\B X ; F)$ and $(\B Y ; G)$.

\begin{remark}\label{rem: famcss1}
Let  $\B S := (\lambda_0, \lambda_1 ; \phi_0, \phi_1)$ be a family of completely separated sets over  
$(\B I ; K)$.\\[1mm]
\normalfont (i) 
\itshape $\phi_{ij}$ is strongly extensional, for every $(i, j)  \in D(I)$.\\[1mm]
\normalfont (ii) 
\itshape $\lambda_{ij}$ is an affine arrow, for every $(i, j)  \in D(I)$.\\[1mm]
\normalfont (iii) 
\itshape The pair $(\phi_0, \phi_1)$ induces an $\B I$-family of sets with an inequality.\\[1mm]
\normalfont (iv) 
\itshape The pair $(\phi_0, \phi_1)$ induces an $(\B I ; K)$-family of completely separated sets.
\end{remark}

\begin{proof}
(i) If $f_i, f_i{'} \in F_i$, then we have that
\begin{align*}
 \phi_{ij}(f_i) \neq_{F_j} \phi_{ij}(f_i{'}) & :\TOT \exists_{y \in \lambda_0(j)}\big([\phi_{ij}(f_i)](y)
 \neq_{\Real} [\phi_{ij}(f_i{'})](y)\big)\\
 & \TOT \exists_{y \in \lambda_0(j)}\big(f_i(\lambda_{ji}(y)) \neq_{\Real} f_i{'}(\lambda_{ji}(y))\big)\\
 & \To \exists_{x \in \lambda_0(i)}\big(f_i(x) \neq_{\Real} f_i{'}(x)\big)\\
 & \TOT: f_i \neq_{F_i} f_i{'}.
\end{align*}
(ii) It follows immediately by condition (c) in Definition~\ref{def: famoffsets}.\\
(iii) Let $\overline{\phi}_0(i) := \big(F_i, =_{F_1}, \neq_{F_i}\big)$, for every $i \in I$. We use (i), and by condition 
(c) in Definition~\ref{def: famoffsets} we get $\phi_{ii}(f_i) = f_i \circ \lambda_{ii} = f_i$, for every $f_i \in F_i$.
We show the commutativity of the following triangle  
\begin{center}
\begin{tikzpicture}

\node (E) at (0,0) {$F_j$};
\node[right=of E] (F) {$F_k,$};
\node [above=of E] (D) {$F_i$};

\draw[->] (E)--(F) node [midway,below] {$\phi_{jk}$};
\draw[->] (D)--(E) node [midway,left] {$\phi_{ij}$};
\draw[->] (D)--(F) node [midway,right] {$\ \phi_{ik}$};

\end{tikzpicture}
\end{center}
with the commutativity of the corresponding triangle for $(\lambda_0, \lambda_1)$ as follows:
$(\phi_{jk} \circ \phi_{ij})(f_i) = (f_i \circ \lambda_{ji}) \circ \lambda_{kj}$
$= f_i \circ (\lambda_{ji} \circ \lambda_{kj}) = f_i \circ \lambda_{ki} = \phi_{ik}(f_i)$.\\
(iv) Let the quadruple $\big(\B {\phi}_0, \phi_1 ; \theta_0, \theta_1)$, where 
$\theta_0(i) := \widehat{\lambda_0(i)} := \{\widehat{x} \mid x \in \lambda_0(i)\},$ and  
$\widehat{x} \colon F_i \to \Real$ is given by $\widehat{x}(f_i) := f_i(x)$, for every $x \in \lambda_0(i)$ and 
$i \in I$. and $\theta_{ij} \colon \widehat{\lambda_0(i)} \to \widehat{\lambda_0(j)}$ is 
given by $\theta_{ij}\big(\widehat{x}\big) := 
\widehat{\lambda_{ij}(x)}$, for every $x \in \lambda_0(i)$ and $(i,j) \in D(I)$.
It is now straightforward to show conditions (a)--(c) of Definition~\ref{def: famoffsets}. 
\end{proof}

The Sigma-set and the Pi-set of an
$(\B I ; K)$-family $\B S$ of completely separated sets are defined as in Definition~\ref{def: sigmaset}.
According to the next proposition, 
the Pi-set of such a family is in $\SetComplSep$. 

\begin{proposition}\label{prp: fcl2}
If $\B S := (\lambda_0, \lambda_1 ; \phi_0, \phi_1)$ is an $(\B I ; K)$-family of completely separated sets, then
$$\prod_{I}\B S := \bigg(\prod_{i \in I}\lambda_0 (i), =_{\prod_{i \in I}\lambda_0 (i)}, 
\neq_{\prod_{i \in I}\lambda_0 (i)} \ ; \ \bigotimes_{i \in I} F_i\bigg) \in \SetComplSep,$$
where
$$\bigotimes_{i \in I} F_i := \bigg\{f_i \circ \pr_i^{\boldmath \Lambda} \mid f_i \in F_i, \ i \in I\bigg\}.$$

\end{proposition}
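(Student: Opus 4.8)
The plan is to verify directly the three requirements that Definition~\ref{def: fsets} imposes on a completely separated set, applied to the structure $\big(\prod_I \B S ; \bigotimes_{i \in I} F_i\big)$. Write $P := \prod_{i \in I}\lambda_0(i)$ and $\C F := \bigotimes_{i \in I} F_i$ for brevity. Since $\prod_I \B S$ is already an object of $\SetIneq$ by Definition~\ref{def: sigmaset} (its equality and inequality being the pointwise ones inherited from $\C D \C O(I, \lambda_0)$), it remains to show: (1) that $\C F$ is an extensional subset of $\D F(P)$; (2) that $\neq_{P}$ is equal to the induced inequality $\neq_{(P, \C F)}$; and (3) that $\neq_{(P, \C F)}$ is tight. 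The single computational observation driving everything is that each generator acts by evaluation at a coordinate, namely $\big(f_i \circ \pr_i^{\Lambda}\big)(\Theta) = f_i(\Theta_i)$, for $\Theta \in P$.

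For (1), I would first note that $\pr_i^{\Lambda}$ is a genuine function $P \to \lambda_0(i)$, since the pointwise equality of the Pi-set gives $\Theta_i =_{\lambda_0(i)} \Theta_i{'}$ whenever $\Theta =_{P} \Theta{'}$; hence each $f_i \circ \pr_i^{\Lambda}$ lies in $\D F(P)$. Extensionality of $\C F$ then follows by taking the defining property $Q(h) :\TOT \exists_{i \in I}\exists_{f_i \in F_i}\big(h =_{\D F(P)} f_i \circ \pr_i^{\Lambda}\big)$, which is extensional on $\D F(P)$ because it is stated through the equality of that function space, so the separation scheme applies.

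For (2), I would unfold the induced inequality using the evaluation observation and then invoke condition (a) of Definition~\ref{def: famoffsets} coordinatewise:
\begin{align*}
\Theta \neq_{(P, \C F)} \Theta{'}
& :\TOT \exists_{i \in I}\exists_{f_i \in F_i}\big(f_i(\Theta_i) \neq_{\Real} f_i(\Theta_i{'})\big)\\
& \TOT \exists_{i \in I}\big(\Theta_i \neq_{(\lambda_0(i), F_i)} \Theta_i{'}\big)\\
& \TOT \exists_{i \in I}\big(\Theta_i \neq_{\lambda_0(i)} \Theta_i{'}\big)\\
& \TOT: \Theta \neq_{P} \Theta{'},
\end{align*}
the third equivalence being exactly condition (a). For the tightness (3), the analogous unfolding shows that $\Theta =_{(P, \C F)} \Theta{'}$ is equivalent to $\forall_{i \in I}\big(\Theta_i =_{(\lambda_0(i), F_i)} \Theta_i{'}\big)$; applying condition (b) of Definition~\ref{def: famoffsets} to each coordinate yields $\forall_{i \in I}\big(\Theta_i =_{\lambda_0(i)} \Theta_i{'}\big)$, which is precisely $\Theta =_{P} \Theta{'}$. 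Hence $\neq_{(P, \C F)}$ is tight, and $\big(\prod_I \B S ; \C F\big) \in \SetComplSep$.

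The argument is essentially a bookkeeping exercise, so I do not expect a genuine obstacle; the only points requiring care are the well-definedness of the coordinate projections and of $\C F$ as an extensional subset (step (1)), and the discipline of applying conditions (a) and (b) pointwise under the quantifier over $i \in I$ rather than globally. Notably, no negation enters at any stage: the positivity of the induced (in)equalities, guaranteed by conditions (a) and (b), is exactly what lets the entire verification proceed constructively.
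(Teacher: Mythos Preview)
Your proposal is correct and follows essentially the same route as the paper: the paper simply says ``proceeding as in the proof of Proposition~\ref{prp: fcl1}(i)'' and records the single displayed equivalence that is your step~(2), leaving tightness implicit by the same analogy. Your version is just a more explicit unpacking of that reference, with the added (harmless) care in step~(1) about extensionality of $\C F$, which the paper does not spell out.
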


\begin{proof}
Proceeding as in the proof of Proposition~\ref{prp: fcl1}(i), we get 
\[\Theta \neq_{\prod_{i \in I}\lambda_0 (i)} \Theta{'} :\TOT \exists_{i \in I}\big(\Theta_i 
\neq_{\lambda_0(i)} \Theta_i{'}\big) 
\TOT \Theta  \neq_{\big(\prod_{i \in I}\lambda_0 (i), \bigotimes_{i \in I} F_i\big)} \Theta{'}.  \qedhere \]
\end{proof}

Similarly, the operation
$\pr_i^{\boldmath \Lambda} \colon  \prod_{i \in I}\lambda_0(i) \sto \lambda_0(i)$, defined by
$\Theta \mapsto \Theta_i,$ is an affine arrow, for every $i \in I$. If $(\B I ; K)$ is discrete, 
and $\B S$ is an $(\B I ; K)$-family of completely separated sets,
then working as in the proof of Proposition~\ref{prp: sigmaset1} we get 
an apartness relation on $\sum_{i \in I}\lambda_0 (i)$, with respect to which
$\pr_1^{\boldmath \Lambda}$ is strongly extensional. Next we examine whether this inequality is
induced by a set of functions.

\begin{proposition}\label{prp: fcl3}
Let $(\B I ; K)$ be discrete, and $\B S := (\lambda_0, \lambda_1 ; \phi_0, \phi_1)$ 
an $(\B I ; K)$-family of completely separated sets.
Let the following extensional subsets of $\D F\big(\sum_{i \in I}\lambda_0 (i)\big):$
$$\widehat{K} := \big\{\widehat{k} \mid k \in K\big\},$$
$$\widehat{k} \colon \sum_{i \in I}\lambda_0 (i) \to \Real, \ \ \ \ \widehat{k}((i, x)) := k(i), \ \ \ 
\ (i, x) \in \sum_{i \in I}\lambda_0 (i),$$
$$\widehat{H} := \bigg\{\widehat{\Phi} \mid \Phi \in \prod_{i \in I}F_i\bigg\},$$
$$\widehat{\Phi} \colon \sum_{i \in I}\lambda_0 (i) \to \Real, \ \ \ \ \widehat{\Phi}(i, x) := \Phi_i(x), \ \ \ 
\ (i, x) \in \sum_{i \in I}\lambda_0 (i).$$
\normalfont (i) 
\itshape For every $(i, x), (j, y) \in \sum_{i \in I}\lambda_0 (i)$, we have that
$$(i, x) \neq_{\mathsmaller{\widehat{K} \cup \widehat{H}}} (j, y) \To (i, x) 
\neq_{\mathsmaller{\sum_{i \in I}\lambda_0(i)}} (j, y).$$
\normalfont (ii) 
\itshape If for every $j \in I$ and every $h \in F_j$ there is $\Phi \in \prod_{i \in I}F_i$, 
such that $\Phi_j =_{\D F(\lambda_0(j)} h$, then for every $(i, x), (j, y) \in \sum_{i \in I}\lambda_0 (i)$, we have that
$$(i, x) \neq_{\mathsmaller{\sum_{i \in I}\lambda_0(i)}} (j, y) \To 
(i, x) \neq_{\mathsmaller{\widehat{K} \cup \widehat{H}}} (j, y).$$
\end{proposition}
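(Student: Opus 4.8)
The plan is to unfold the induced inequality $\neq_{\mathsmaller{\widehat{K}\cup\widehat{H}}}$ into a disjunction and match each disjunct against the two clauses defining $\neq_{\mathsmaller{\sum_{i \in I}\lambda_0(i)}}$. Since membership in the union $\widehat{K}\cup\widehat{H}$ is a disjunction, for $(i,x),(j,y)\in\sum_{i \in I}\lambda_0(i)$ we have, using $\widehat{k}((i,x))=k(i)$ and $\widehat{\Phi}((i,x))=\Phi_i(x)$,
\[(i,x)\neq_{\mathsmaller{\widehat{K}\cup\widehat{H}}}(j,y)\TOT\exists_{k\in K}\big(k(i)\neq_{\Real}k(j)\big)\vee\exists_{\Phi\in\prod_{i \in I}F_i}\big(\Phi_i(x)\neq_{\Real}\Phi_j(y)\big).\]
Since $(\B I;K)$ is completely separated, the first disjunct is exactly $i\neq_{(I,K)}j$, i.e. $i\neq_I j$, the first clause of $\neq_{\mathsmaller{\sum_{i \in I}\lambda_0(i)}}$. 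The computation recurring in both parts is the identity $\Phi_i=\Phi_j\circ\lambda_{ij}$ valid whenever $i=_I j$: then $(j,i)\in D(I)$, so the Pi-set compatibility of $\Phi$ gives $\Phi_i=_{F_i}\phi_{ji}(\Phi_j)$, and condition (c) of Definition~\ref{def: famoffsets}, applied with the roles of $i$ and $j$ interchanged, yields $\phi_{ji}(\Phi_j)=\Phi_j\circ\lambda_{ij}$; hence $\Phi_i(x)=\Phi_j(\lambda_{ij}(x))$ for all $x\in\lambda_0(i)$.

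For (i), I would assume $(i,x)\neq_{\mathsmaller{\widehat{K}\cup\widehat{H}}}(j,y)$ and treat the two disjuncts. If the first holds then $i\neq_I j$, whence $(i,x)\neq_{\mathsmaller{\sum_{i \in I}\lambda_0(i)}}(j,y)$ at once. If the second holds, fix $\Phi\in\prod_{i \in I}F_i$ with $\Phi_i(x)\neq_{\Real}\Phi_j(y)$. Here is where discreteness of $(\B I;K)$ enters: by $(\Ineq_6)$ we have $i=_I j$ or $i\neq_I j$. In the latter case we conclude as before. In the former, substitute $\Phi_i(x)=\Phi_j(\lambda_{ij}(x))$ to obtain $\Phi_j(\lambda_{ij}(x))\neq_{\Real}\Phi_j(y)$; since $\Phi_j\in F_j$, this witnesses $\lambda_{ij}(x)\neq_{(\lambda_0(j),F_j)}y$, which by condition (a) of Definition~\ref{def: famoffsets} coincides with $\lambda_{ij}(x)\neq_{\lambda_0(j)}y$, giving the second clause of $\neq_{\mathsmaller{\sum_{i \in I}\lambda_0(i)}}$.

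For (ii), I would assume $(i,x)\neq_{\mathsmaller{\sum_{i \in I}\lambda_0(i)}}(j,y)$ and split on its two clauses. If $i\neq_I j$, then $i\neq_{(I,K)}j$, so some $k\in K$ satisfies $\widehat{k}((i,x))=k(i)\neq_{\Real}k(j)=\widehat{k}((j,y))$, giving $(i,x)\neq_{\widehat{K}}(j,y)$ and hence $(i,x)\neq_{\mathsmaller{\widehat{K}\cup\widehat{H}}}(j,y)$ by Remark~\ref{rem: f2}(ii). If instead $i=_I j$ and $\lambda_{ij}(x)\neq_{\lambda_0(j)}y$, then condition (a) supplies some $h\in F_j$ with $h(\lambda_{ij}(x))\neq_{\Real}h(y)$, and this is precisely where the extension hypothesis is needed: it yields $\Phi\in\prod_{i \in I}F_i$ with $\Phi_j=_{\D F(\lambda_0(j))}h$, so $\Phi_j(\lambda_{ij}(x))\neq_{\Real}\Phi_j(y)$, and using $\Phi_i(x)=\Phi_j(\lambda_{ij}(x))$ we obtain $\Phi_i(x)\neq_{\Real}\Phi_j(y)$, i.e. $(i,x)\neq_{\widehat{H}}(j,y)$, hence $(i,x)\neq_{\mathsmaller{\widehat{K}\cup\widehat{H}}}(j,y)$ again by Remark~\ref{rem: f2}(ii). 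I expect the main obstacle in both directions to be the same bookkeeping step, namely correctly invoking the compatibility of the Pi-section $\Phi$ with the transport maps through condition (c); in (ii) one must additionally recognize that the extension hypothesis is exactly what converts a fiberwise witness $h\in F_j$ into a global witness $\widehat{\Phi}\in\widehat{H}$.
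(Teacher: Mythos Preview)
Your proposal is correct and follows essentially the same route as the paper: splitting on the disjunction from $\widehat{K}\cup\widehat{H}$, using discreteness of $(\B I;K)$ to case-split in (i), and invoking the identity $\Phi_i=\phi_{ji}(\Phi_j)=\Phi_j\circ\lambda_{ij}$ coming from the Pi-compatibility of $\Phi$ together with condition (c). If anything, you are slightly more explicit than the paper in justifying that identity and in citing condition (a) and Remark~\ref{rem: f2}(ii), but the argument is the same.
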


\begin{proof}
(i) If there is $k \in K$ such that $\widehat{k}((i, x)) := k(i) \neq_{\Real} k(j) =: \widehat{k}((j, y))$, then 
$i \neq_K j$, and the required inequality follows. Let $\Phi \in \prod_{i \in I}F_i$ such that 
$$\widehat{\Phi}(i, x) := \Phi_i(x) \neq_{\Real} \Phi_j(y) =: \widehat{\Phi}(j, y).$$
If $i \neq_K j$, then we are reduced to the previous case. If $i =_I j$, then 
$$\Phi_i(x) = [\phi_{ji}(\Phi_j)](x) = \Phi_j(\lambda_{ij}(x)) \neq \Phi_j(y),$$
hence $\lambda_{ij}(x) \neq_{\lambda_0(j)} y$, as by the definition of a family of 
completely separated sets the elements of $F_j$, for every $j \in I$, are strongly extensional functions. The required 
inequality follows then immediately.\\
(ii) If $i \neq_K j$, the implication follows trivially. If $i =_I j$ and $\lambda_{ij}(x) \neq_{\lambda_0(j)} y$,
there is $h \in F_j$ such that $h(\lambda_{ij}(x)) \neq_{\Real} h(y)$. Let $\Phi \in \prod_{i \in I}F_i$, 
such that $\Phi_j = h$. As $\Phi_i = \phi_{ji}(\Phi_j) = \Phi_j \circ \lambda_{ij}$, we get
$$\ \ \ \ \ \ \ \ \ \ \ \ \ \ \ \ \widehat{\Phi}(i, x) := \Phi_i(x) =_{\Real} \Phi_j(\lambda_{ij}(x)) =_{\Real}
h(\lambda_{ij}(x)) 
\neq_{\Real} h(y) =_{\Real} \Phi_j(y) =: \widehat{\Phi}(j, y). \ \ \ \ \ \ \ \ \ \ \ \ \ \ \ \ \ \  \qedhere$$
\end{proof}

By Proposition~\ref{prp: fcl3} to get that the Sigma-set of $\B S$ is completely separated, we need 
a discrete, completely separated index-set $(\B I ; K)$, and to suppose that for every $j \in I$,
every element of $F_j$ is ``extended'' to an element of $\prod_{i \in I}F_i$. To overcome these difficulties, we 
introduce the notion of a \textit{global} family of completely separated sets. In contrast to 
Definition~\ref{def: famofsets},
where the transport maps $\lambda_{ij}$ are defined on pairs
$(i, j)$ with $i =_I j$, in a global family the transport maps are defined on every pair
$(i, j)$. 

\begin{definition}\label{def: gfamoffsets}
If $\B I$ is in $\SetIneq$, a global $\B I$-family of sets with an inequality  
is a pair $\boldmath \Lambda^* := (\lambda_0, \lambda_1^*)$, where\index{$\lambda_0$}
$\lambda_0 \colon I \sto \D V_0^{\neq}$, and
\[ \lambda_1^* \colon \bigcurlywedge_{(i, j) \in I \times I}\D F^{\neq}\big(\lambda_0(i), \lambda_0(j)\big), 
\ \ \ \lambda_1^*(i, j) := \lambda_{ij}^*, \ \ \ (i, j) \in I \times I, \]
such that the \textit{global transport maps}\index{transport map of a family of sets} $\lambda_{ij}^*$
\index{transport map of a family of sets}\index{$\lambda_{ij}$}
of $\boldmath \Lambda^*$ satisfy condition \normalfont (a) 
\itshape of Definition~\ref{def: famofsets} and condition \normalfont (b) 
\itshape of Definition~\ref{def: famofsets},
for every $i, j \in I$ with $i =_I j$, and every $k \in I$. If $(\B I ; K)$ is in $\SetComplSep$, an
$(\B I; K)$-family of completely separated sets
is a structure $\B S^* := (\lambda_0, \lambda_1^* ; \phi_0, \phi_1^*)$, where $(\lambda_0, \lambda_1^*)$ is 
a global $\B I$-family of sets with an inequality, 
$\phi_0$ is as in Definition~\ref{def: famoffsets},  
\[ \phi_1^* \colon \bigcurlywedge_{(i, j) \in I \times I}\D F\big(F_i, F_j\big), 
\ \ \ \phi_1^*(i, j) := \phi_{ij}^* \colon F_i \to F_j, \ \ \ (i, j) \in I \times I, \]
and the global maps $\phi_{ij}^*$ satisfy conditions \normalfont (a, b) 
\itshape of Definition~\ref{def: famoffsets} and condition \normalfont (c) 
\itshape of Definition~\ref{def: famoffsets},
for every $i, j \in I$.
The equality and inequality of the Sigma-set of $\Lambda^* (S^*)$ are given by 
\[ (i, x) =_{\mathsmaller{\sum_{i \in I}\lambda_0 (i)}} (j, y) : \TOT i =_I j \ \& \ \lambda_{ij}^* (x) 
=_{\lambda_0 (j)} y, \]
\[ (i, x) \neq_{\mathsmaller{\sum_{i \in I}\lambda_0 (i)}} (j, y) : \TOT i \neq_{(I,K)} j \ \vee \ \lambda_{ij}^*(x) 
\neq_{\lambda_0 (j)} y. \]
The Pi-set of $\Lambda^* (S^*)$ is defined as in Definition~\ref{def: sigmaset}.
\end{definition}

The constant global $\B I$-family of sets with an inequality $\B X$ is defined as the constant $\B I$-family $\B X$,
with $\lambda_{ij}^* := \id_X$, for every $(i, j) \in I \times I$.
If this constant global family is considered, 
then the inequality of its Sigma-set is reduced to the inequality of the product set.
The constant global $(\B I ; K)$-family of completely separated sets $(\B X; F)$ is defined as the constant
$(\B I ; K)$-family $(\B X ; F)$,
with $\phi_{ij}^* := \id_F$, for every $(i, j) \in I \times I$.
To define the global $\B 2$-family $\boldmath \Lambda^{\D 2}(\B X, \B Y)$ of $\B X$ and $\B Y$, we add any two strongly 
extensional functions $\lambda_{01}^* \colon X \to Y$ and  $\lambda_{10}^* \colon Y \to X$, as the following triangles
trivially commute
\begin{center}
\begin{tikzpicture}

\node (E) at (0,0) {$X$};
\node[right=of E] (F) {$Y$};
\node [above=of E] (D) {$X$};
\node[right=of F] (K) {$Y$};
\node [above=of K] (L) {$Y$};
\node [right=of K] (M) {$X$.};

\draw[->] (E)--(F) node [midway,below] {$ \lambda_{01}^*$};
\draw[->] (D)--(E) node [midway,left] {$\id_X$};
\draw[->] (D)--(F) node [midway,right] {$\ \lambda_{01}^*$};
\draw[->] (K)--(M) node [midway,below] {$\lambda_{10}^*$};
\draw[->] (L)--(K) node [midway,left] {$\id_Y$};
\draw[->] (L)--(M) node [midway,right] {$\ \lambda_{10}^*$};

\end{tikzpicture}
\end{center}
To define the global $\big(\B 2 ; \D F(\D 2)\big)$-family of $(\B X ; F)$ and $(\B Y ; G)$, we also add maps 
 $\phi_{01}^* \colon F \to G$ and  $\phi_{10}^* \colon G \to F$, such that the following triangles commute
\begin{center}
\begin{tikzpicture}

\node (E) at (0,0) {$X$};
\node[right=of E] (F) {$Y$};
\node [below=of F] (D) {$\Real$};
\node[right=of F] (S) {};
\node[right=of S] (K) {$Y$};
\node [right=of K] (L) {$X$};
\node [below=of L] (M) {$\Real$,};

\draw[->] (E)--(F) node [midway,above] {$ \lambda_{01}^*$};
\draw[->] (E)--(D) node [midway,left] {$F \ni \phi_{10}^*(g) \ $};
\draw[->] (F)--(D) node [midway,right] {$g \in G$};
\draw[->] (K)--(M) node [midway,left] {$G \ni \phi_{01}^*(f) \ $};
\draw[->] (K)--(L) node [midway,above] {$\lambda_{10}^*$};
\draw[->] (L)--(M) node [midway,right] {$ f \in F$};

\end{tikzpicture}
\end{center} 
and moreover, 
$g \circ \lambda_{01}^* \in F$ and $f \circ \lambda_{10}^* \in G$, 
for every $g \in G$ and $f \in F$, respectively.
Next we show that the Sigma-set of 
a global $(\B I ; K)$-family of completely separated sets is also
completely separated.

\begin{theorem}\label{thm: sigmafset}
Let $(\B I ; K)$ be completely separated and let $\B S^* := (\lambda_0, \lambda_1^*; \phi_0, \phi_1^*)$ be
a global $(\B I ; K)$-family of completely separated sets. Let the extensional subsets $\widehat{K}$ and $\widehat{H}$ of 
$\D F\big(\sum_{i \in I}\lambda_0 (i)\big)$ that were defined in Proposition~\ref{prp: fcl3}.
If the Sigma-set of $\boldmath \Lambda^*$ is equipped with the equality and inequality given 
in Definition~\ref{def: gfamoffsets}, then for every $(i, x), (j, y) \in \sum_{i \in I}\lambda_0 (i)$ we have that
$$(i, x) \neq_{\mathsmaller{\widehat{K} \cup \widehat{H}}} (j, y) \TOT (i, x) 
\neq_{\mathsmaller{\sum_{i \in I}\lambda_0(i)}} (j, y),$$
and hence
$$\sum_{I}\B S^* := \bigg(\sum_{i \in I}\lambda_0 (i), =_{\sum_{i \in I}\lambda_0 (i)}, 
\neq_{\sum_{i \in I}\lambda_0 (i)} \ ; \ \widehat{K} \cup \widehat{H}\bigg) \in \SetComplSep.$$
\end{theorem}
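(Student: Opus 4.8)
The plan is to prove the displayed biconditional by unfolding both inequalities and then establishing the two implications separately. Writing $\neq_{\widehat{K}\cup\widehat{H}}$ out by definition, $(i,x)\neq_{\widehat{K}\cup\widehat{H}}(j,y)$ holds iff some $\widehat{k}$ with $k\in K$, or some $\widehat{\Phi}$ with $\Phi\in\prod_{i\in I}F_i$, separates the two points; since $\widehat{k}(i,x)=k(i)$ and $\widehat{\Phi}(i,x)=\Phi_i(x)$, this reads $i\neq_{(I,K)}j\ \vee\ \exists_{\Phi\in\prod_{i\in I}F_i}\big(\Phi_i(x)\neq_{\Real}\Phi_j(y)\big)$. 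As the Sigma-inequality is $i\neq_{(I,K)}j\ \vee\ \lambda_{ij}^*(x)\neq_{\lambda_0(j)}y$, the common disjunct $i\neq_{(I,K)}j$ is discharged on both sides, and the biconditional reduces to the two cross-implications between the remaining disjuncts. Structurally this mirrors Proposition~\ref{prp: fcl3}, but the global transport maps must now replace both the discreteness of $(\B I;K)$ used in part (i) and the extension hypothesis imposed in part (ii).

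For $\neq_{\sum_{i\in I}\lambda_0(i)}\To\neq_{\widehat{K}\cup\widehat{H}}$ I would argue as follows. If $i\neq_{(I,K)}j$, a witness $\widehat{k}$ separates $(i,x)$ and $(j,y)$ in $\widehat{K}$. If instead $\lambda_{ij}^*(x)\neq_{\lambda_0(j)}y$, then since $\big(\lambda_0(j);F_j\big)$ is completely separated there is $h\in F_j$ with $h(\lambda_{ij}^*(x))\neq_{\Real}h(y)$. The key construction is the dependent function $\Phi$ given by $\Phi_l:=\phi_{jl}^*(h)$ for every $l\in I$; I would check $\Phi\in\prod_{i\in I}F_i$ by verifying $\Phi_m=_{F_m}\phi_{lm}^*(\Phi_l)$ for $l=_Im$, which by condition (c) of Definition~\ref{def: famoffsets} unfolds to $h\circ(\lambda_{lj}^*\circ\lambda_{ml}^*)=h\circ\lambda_{mj}^*$ and hence follows from condition (b) of Definition~\ref{def: famofsets} applied to the equal pair $(m,l)$ and target $j$. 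The identity law then gives $\Phi_j=h$ and condition (c) gives $\Phi_i=h\circ\lambda_{ij}^*$, so $\widehat{\Phi}(i,x)=h(\lambda_{ij}^*(x))\neq_{\Real}h(y)=\widehat{\Phi}(j,y)$. This is precisely the extension that had to be \emph{assumed} in Proposition~\ref{prp: fcl3}(ii), now supplied automatically by the global structure.

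For the converse $\neq_{\widehat{K}\cup\widehat{H}}\To\neq_{\sum_{i\in I}\lambda_0(i)}$ the index disjunct is immediate, so I would start from $\Phi_i(x)\neq_{\Real}\Phi_j(y)$ for some $\Phi\in\prod_{i\in I}F_i$ and insert the intermediate value $\Phi_j(\lambda_{ij}^*(x))=[\phi_{ji}^*(\Phi_j)](x)$, using the cotransitivity $(\Ineq_5)$ of the tight apartness $\neq_{\Real}$. One branch yields $\Phi_j(\lambda_{ij}^*(x))\neq_{\Real}\Phi_j(y)$; since $\Phi_j\in F_j$ is strongly extensional and $\neq_{\lambda_0(j)}$ is equal to $\neq_{(\lambda_0(j),F_j)}$, this gives the fiber disjunct $\lambda_{ij}^*(x)\neq_{\lambda_0(j)}y$. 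The other branch yields $\Phi_i(x)\neq_{\Real}[\phi_{ji}^*(\Phi_j)](x)$, and here I must produce the index disjunct $i\neq_{(I,K)}j$.

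This last branch is the main obstacle, and it is exactly the step where discreteness was invoked in Proposition~\ref{prp: fcl3}(i). The point is that when $i=_Ij$ the Pi-compatibility of $\Phi$ together with the functoriality of the global transport maps forces $\Phi_i=\phi_{ji}^*(\Phi_j)$, so the two reals coincide and no apartness can occur; hence a genuine apartness $\Phi_i(x)\neq_{\Real}[\phi_{ji}^*(\Phi_j)](x)$ must detect that $i$ and $j$ are apart. Turning this into the \emph{positive} conclusion $i\neq_{(I,K)}j$, rather than the merely negative $\neg(i=_Ij)$, is where the global structure and the generalized notion of a strongly extensional dependent function (as announced for Corollary~\ref{cor: sepr2}) have to be used: one reads $\Phi_i(x)\neq_{\Real}[\phi_{ji}^*(\Phi_j)](x)$ as a strong-extensionality statement for the dependent assignment $l\mapsto\Phi_l$ evaluated along the transport of $x$, transferring the real apartness back to an apartness of indices witnessed in $K$. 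Once this branch is secured, the biconditional is complete, and tightness of $\neq_{(\sum_{i\in I}\lambda_0(i),\,\widehat{K}\cup\widehat{H})}$ with respect to $=_{\sum_{i\in I}\lambda_0(i)}$ — hence membership in $\SetComplSep$ — follows from the parallel computation for the induced equality together with Remark~\ref{rem: f1}, since an induced inequality is always tight with respect to its own induced equality.
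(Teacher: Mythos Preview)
Your treatment of the implication from $\neq_{\sum_{i\in I}\lambda_0(i)}$ to $\neq_{\widehat K\cup\widehat H}$ coincides with the paper's: given $h\in F_j$ with $h(\lambda_{ij}^*(x))\neq_{\Real} h(y)$, the paper sets $\Phi^h_l:=h\circ\lambda^*_{lj}$, which is exactly your $\phi^*_{jl}(h)$ by condition~(c), and verifies $\Phi^h\in\prod_{l\in I}F_l$ via the composition law for the global transport maps. Your tightness paragraph, however, is too schematic. What must be shown is that $=_{\widehat K\cup\widehat H}$ implies the \emph{given} equality $=_{\sum_{i\in I}\lambda_0(i)}$, not merely the induced one; the paper does this explicitly: the $\widehat K$-part together with tightness of $\neq_{(I,K)}$ yields $i=_I j$, and then for every $h\in F_j$ the $\widehat H$-part applied to $\Phi^h$ gives $h(\lambda^*_{ij}(x))=_{\Real}h(y)$, whence $\lambda^*_{ij}(x)=_{\lambda_0(j)}y$ by tightness of $\neq_{(\lambda_0(j),F_j)}$.

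For the implication from $\neq_{\widehat K\cup\widehat H}$ to $\neq_{\sum_{i\in I}\lambda_0(i)}$ the paper writes only ``we work exactly as in the proof of Proposition~\ref{prp: fcl3}(i)'' and gives no further detail. You have correctly noticed that Proposition~\ref{prp: fcl3}(i) uses the discreteness of $(\B I;K)$ to split into the cases $i\neq_{(I,K)} j$ versus $i=_Ij$, a split not available here. Your cotransitivity manoeuvre is the natural substitute, and the residual branch $\Phi_i(x)\neq_{\Real}[\phi^*_{ji}(\Phi_j)](x)$ is a genuine obstacle: the Pi-condition on $\Phi$ only gives $\neg(i=_Ij)$ from it, not the positive $i\neq_{(I,K)}j$. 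Your proposed fix does not close this gap. Definition~\ref{def: sedf} \emph{defines} what it means for a dependent function to be strongly extensional; it does not establish that every element of $\prod_{l\in I}F_l$ is strongly extensional over $(\phi_0,\phi^*_1)$, which is exactly what you would need. Corollary~\ref{cor: sepr2} concerns only the second projection $\pr_2^{\boldmath\Lambda^*}$ over the specific family $\boldmath\Sigma^*$, not an arbitrary $\Phi$, and in any case it is placed \emph{after} the theorem as an application, so invoking it here would be circular. In short, your argument for this direction is incomplete, and the paper's own proof offers no mechanism beyond the bare reference to Proposition~\ref{prp: fcl3}(i).
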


\begin{proof}
To show $(i, x) \neq_{\mathsmaller{\widehat{K} \cup \widehat{H}}} (j, y) \To (i, x) 
\neq_{\mathsmaller{\sum_{i \in I}\lambda_0(i)}} (j, y)$, we work exactly as in the proof of Proposition~\ref{prp: 
fcl3}(i). For the converse implication, we work as in the proof of Proposition~\ref{prp: fcl3}(ii). It suffices 
to show that, if $h \in F_j$ with $h(\lambda_{ij}^*(x)) \neq_{\Real} h(y)$, then there is $\Phi^h \in
\prod_{i \in I}F_i$ with $\Phi^h_j = h$. If $i \in I$, we define $\Phi_i^h := h \circ \lambda_{ij}^*$
\begin{center}
\begin{tikzpicture}

\node (E) at (0,0) {$\lambda_0(i)$};
\node[right=of E] (F) {$\lambda_0(j)$};
\node[below=of F] (A) {$\Real$.};

\draw[->] (E)--(F) node [midway,above] {$\lambda_{ij}^*$};
\draw[->] (E)--(A) node [midway,left] {$ F_i \ni \Phi_i^h   \ \ $};
\draw[->] (F)--(A) node [midway,right] {$h \in F_j$};

\end{tikzpicture}
\end{center}
By the definition of a global $(\B I ; K)$-family of completely separated sets 
we get $\Phi_i^h \in F_i$, for every $i \in I$. Moreover,
$\Phi_j^h = h \circ \lambda_{jj}^* = h \circ \id_{\lambda_0(j)} = h$. To show that $\Phi^h \in
\prod_{i \in I}F_i$, let $i, k \in I$ such that $i =_{(I,K)} k$. By the new  condition (b) satisfied by $\boldmath \Lambda^*$
we have that
$$\phi^*_{ik}\big(\Phi^h_i\big) = \phi^*_{ik}( h \circ \lambda_{ij}^*) =  h \circ \lambda_{ij}^* \circ \lambda_{ki}^* 
=  h \circ \lambda_{kj}^* = \Phi_k^h.$$
Let $(i, x) =_{\mathsmaller{\widehat{K} \cup \widehat{H}}} (j, y)$. By the tightness of $\neq_{(I,K)}$ we have that
$$\forall_{k \in K}\big(\widehat{k}((i,x)) =_{\Real} \widehat{k}((z,y))\big) :\TOT \forall_{k \in K}\big(k(i) =_{\Real}
k(j)\big)
\To i =_I j.$$
If $\Phi \in \prod_{i \in I}F_i$, then
$$\widehat{\Phi}((i,x)) =_{\Real} \widehat{\Phi}((j,y)) :\TOT \Phi_i(x) =_{\Real} \Phi_j(y) \TOT 
\Phi_j(y) =_{\Real} [\phi^*_{ij}(\Phi_j)](x) = \Phi_j(\lambda^*_{ij}(x)).$$
Hence, for every $h \in F_j$ we have that
$h(y) =: \Phi_j^h(y) =_{\Real} \Phi_j^h(\lambda^*_{ij}(x)) := h\big(\lambda^*_{ij}(x))\big).$
By the tightness of $\neq_{F_j}$ we get the required equality $y =_{\lambda_0(j)} \lambda^*_{ij}(x)$.
\end{proof}

The importance of the previous theorem lies on the fact that there is no obvious way to show that the 
canonical inequality of the Sigma-set of a 
global family $\B S^*$ of completely separated sets is an apartness relation using only its definition.
The proof of Proposition~\ref{prp: sigmaset1} cannot be carried out, as the transport maps are between
any two sets of the given family. By showing though, that this inequality is equivalent to the inequality induced by 
$\widehat{K} \cup \widehat{H}$, then by Remark~\ref{rem: f1}(v)
this inequality 
is also an apartness relation on the Sigma-set of $\B S^*$! The reason behind this, seemingly unexpected,
result is the extra information provided by the data associated to the notion of a global family of completely separated
sets. Through the notion of a global family $\B S^*$ of completely separated sets we can also define when a 
dependent function over $\boldmath \Lambda^* (\B S^*)$ is strongly extensional in a way that generalises the 
strong extensionality of (non-dependent) functions.

\begin{definition}\label{def: sedf}
If $\boldmath \Lambda^* (\B S^*)$ is a global $\B I$-family of sets with an inequality, we call
a dependent function $\Phi$ in the Pi-set of 
$\boldmath \Lambda^* (\B S^*)$ strongly extensional, if
$$\forall_{i, j \in I}\big(\lambda_{ij}^*(\Phi_i) \neq_{\lambda_0(j)} \Phi_j \To i \neq_{(I,K)} j\big).$$
\end{definition}

Notice that if $\lambda_{ij}^*(\Phi_i) \neq_{\lambda_0(j)} \Phi_j$, then $\neg(i =_I j)$, as if $i =_I j$, then 
$\lambda_{ij}^*(\Phi_i) =_{\lambda_0(j)} \Phi_j$. The above definition of strong extentionality implies the stronger 
inequality $i \neq_I j$, and clearly generalises the strong extensionality of non-dependent functions, if a 
constant global family of completely separated sets is considered. In analogy to the strong extensionality of 
the first projection, 
we can show that the second projection $\pr_2^{\boldmath \Lambda^*}$
of the Sigma-set of $\boldmath \Lambda^* (\B S^*)$, defined as in Definition~\ref{def: sigmaset},
is a strongly extensional dependent function.

\begin{corollary}\label{cor: sepr2}
 Let $\boldmath \Lambda^*$ be a global $\B I$-family of sets with an inequality.\\[1mm]
\normalfont (i) 
\itshape If $\sigma_0 \colon \sum_{i \in I}\lambda_0 (i) \sto 
\D V_0^{\neq}$ is given by
$\sigma_0(i, x) := \lambda_0(i)$, for every $(i, x) \in \sum_{i \in I}\lambda_0 (i)$, and
$\sigma^*_{(i,x),(j,y)} := \lambda_{ij}^*$, for every 
$(i,x),(j,y) \in \sum_{i \in I}\lambda_0 (i)$, then 
$\boldmath \Sigma^* := (\B \sigma_0, \sigma_1^*)$ is a global $\B I$-family of sets with an inequality. \\[1mm]
\normalfont (ii) 
\itshape The second projection $\pr_2^{\boldmath \Lambda^*}$ is a strongly extensional dependent function over $\boldmath \Sigma^*$.
\end{corollary}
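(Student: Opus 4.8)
The plan is to read both parts directly off the defining data: the clauses of a global family (Definition~\ref{def: gfamoffsets}), the description of the Pi-set (Definition~\ref{def: sigmaset}), and the notion of strong extensionality for dependent functions (Definition~\ref{def: sedf}), with the one essential adjustment that everything is now indexed by the Sigma-set $\sum_{i \in I}\lambda_0(i)$ and its inequality $\neq_{\sum_{i \in I}\lambda_0(i)}$, rather than by $\B I$. The conceptual point that carries the whole argument in (ii) is that the witness of the Sigma-set inequality is exactly the fibrewise inequality $\lambda_{ij}^*(x) \neq_{\lambda_0(j)} y$; consequently the strong extensionality of the second projection will be forced by the very shape of that inequality and will require no genuine work.

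For (i), I would verify the four items that make $\Sigma^* := (\sigma_0, \sigma_1^*)$ a global family indexed by $\sum_{i \in I}\lambda_0(i)$. First, $\sigma_0 \colon \sum_{i \in I}\lambda_0(i) \sto \D V_0^{\neq}$, $(i,x) \mapsto \overline{\lambda}_0(i)$, is an assignment routine, and each transport map $\sigma^*_{(i,x),(j,y)} := \lambda_{ij}^* \in \D F^{\neq}\big(\lambda_0(i),\lambda_0(j)\big) = \D F^{\neq}\big(\sigma_0(i,x),\sigma_0(j,y)\big)$ is strongly extensional because the maps of $\Lambda^*$ are. Condition (a) holds since $\sigma^*_{(i,x),(i,x)} = \lambda_{ii}^* = \id_{\lambda_0(i)}$. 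For the global form of condition (b), given $(i,x) =_{\sum_{i \in I}\lambda_0(i)} (j,y)$ and any $(k,z)$, I observe that this index equality forces $i =_I j$, so the modified clause (b) of $\Lambda^*$ (which holds precisely when $i =_I j$, for every $k \in I$) yields $\lambda_{jk}^* \circ \lambda_{ij}^* = \lambda_{ik}^*$, i.e.\ $\sigma^*_{(j,y),(k,z)} \circ \sigma^*_{(i,x),(j,y)} = \sigma^*_{(i,x),(k,z)}$. Thus all clauses transfer verbatim from $\Lambda^*$.

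For (ii), I would first confirm that $\pr_2^{\Lambda^*}$, which sends $(i,x)$ to $x \in \lambda_0(i) = \sigma_0(i,x)$, lies in the Pi-set of $\Sigma^*$. Its coherence condition over $D\big(\sum_{i \in I}\lambda_0(i)\big)$ demands, for $(i,x) =_{\sum_{i \in I}\lambda_0(i)} (j,y)$, that $y =_{\lambda_0(j)} \lambda_{ij}^*(x)$, which is exactly the second conjunct of that index equality; hence $\pr_2^{\Lambda^*}$ is a dependent function over $\Sigma^*$. Reading Definition~\ref{def: sedf} with index set $\sum_{i \in I}\lambda_0(i)$, its strong extensionality amounts to $\sigma^*_{(i,x),(j,y)}\big(\pr_2^{\Lambda^*}(i,x)\big) \neq_{\sigma_0(j,y)} \pr_2^{\Lambda^*}(j,y) \To (i,x) \neq_{\sum_{i \in I}\lambda_0(i)} (j,y)$, i.e.\ $\lambda_{ij}^*(x) \neq_{\lambda_0(j)} y \To (i,x) \neq_{\sum_{i \in I}\lambda_0(i)} (j,y)$. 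Since by Definition~\ref{def: gfamoffsets} the consequent unfolds to $i \neq_I j \ \vee \ \lambda_{ij}^*(x) \neq_{\lambda_0(j)} y$, the antecedent is literally its right disjunct and the implication is immediate.

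The argument is essentially bookkeeping, so rather than a real obstacle, the only points demanding care are to keep all definitions read over the new index $\sum_{i \in I}\lambda_0(i)$ (with $\neq_{\sum_{i \in I}\lambda_0(i)}$) and not over $\B I$, and to notice that the global clause (b) of $\Lambda^*$ is available in part (i) only because the index equality $(i,x) =_{\sum_{i \in I}\lambda_0(i)} (j,y)$ delivers $i =_I j$. As stressed after Theorem~\ref{thm: sigmafset}, the payoff is that strong extensionality of $\pr_2^{\Lambda^*}$ comes for free from the shape of the Sigma-set inequality, avoiding the negation-heavy reasoning that an arbitrary inequality would require.
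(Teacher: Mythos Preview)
Your proposal is correct and follows essentially the same route as the paper: part (i) is declared immediate there, and your unpacking of conditions (a) and (b) is exactly what that immediacy amounts to; for part (ii) you verify Pi-set membership via the second conjunct of the Sigma-equality and strong extensionality via the right disjunct of the Sigma-inequality, precisely as the paper does. The only difference is that you spell out (i) in more detail and make explicit the re-indexing over the Sigma-set, which the paper leaves implicit.
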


\begin{proof}
 The proof of (i) is immediate. For the proof of (ii), that is
 $$\pr_2^{\boldmath \Lambda^*} \in \prod_{(i,x) \in \sum_{i \in I}\lambda_0 (i)}\sigma_0(i, x) := 
 \prod_{(i,x) \in \sum_{i \in I}\lambda_0 (i)}\lambda_0(i),$$
let $(i,x),(j,y) \in \sum_{i \in I}\lambda_0 (i)$
 such that $(i, x) =_{\mathsmaller{\sum_{i \in I}\lambda_0 (i)}} (j, y)$, that is $i =_{(I,K)} j$ and 
 $\lambda_{ij}^* (x) =_{\lambda_0 (j)} y$. Then,
 \[ \pr_2^{\boldmath \Lambda^*}(j,y) := y
=_{\lambda_0 (j)} \lambda_{ij}^* (x)
:= \sigma_{(i,x),(j,y)}^*\big(\pr_2^{\boldmath \Lambda^*}(i,x)\big).
\]
If $(i,x),(j,y)$ are arbitrary elements of $\sum_{i \in I}\lambda_0 (i)$, such that
$$ \sigma^*_{(i,x),(j,y)}\big(\pr_2^{\boldmath \Lambda^*}((i,x)) \neq_{\sigma_0((j,y))} \pr_2^{\boldmath \Lambda^*}((j,y))
:\TOT \lambda_{ij}^*(x) \neq_{\lambda_0(j)} y,$$
then the inequality  $(i, x) \neq_{\mathsmaller{\sum_{i \in I}\lambda_0 (i)}} (j, y)$ follows immediately.
\end{proof}

\section{The free completely separated set}
\label{sec: free}

Next we define the free completely separated set on a given set $(X, =_X)$.
If $(X, =_X)$ is in $\Set$, then by Remark~\ref{rem: f1}(iii) $(X, =_{(X, F)}, \neq_{(X, F)} ; F)$ is in 
$\SetComplSep$. As $\D F(X)$ is an extensional subset of itself
$$\D F(X) = \big\{f \in \D F(X) \mid f =_{\D F(X)} f\big\},$$
the structure 
$\big(X, =_{(X, \D F(X))}, \neq_{(X, \D F(X))} ; \D F(X)\big)$ is in $\SetComplSep$.

\begin{definition}\label{def: free}
 If $(X, =_X)$ is in $\Set$, the free completely separated set on $(X, =_X)$ is the 
 structure 
$\varepsilon \B X := \big(X, =_{(X, \D F(X))}, \neq_{(X, \D F(X))} ; \D F(X)\big)$.
\end{definition}

\begin{theorem}\label{thm: free1}
\normalfont (i) 
\itshape $\varepsilon \B X$ has the universal property of the free completely separated set on $(X, =_X)$, namely 
there is a function $i_X \colon X \to |\varepsilon \B X|$ such that for every completely separated set $\B Y$ and function 
$h \colon X \to |\B Y|$ there is a unique strongly extensional function $\varepsilon h \colon  |\varepsilon \B X|
\to |\B Y|$ such the following triangle 
commutes
\begin{center}
\begin{tikzpicture}

\node (E) at (0,0) {$X$};
\node[right=of E] (F) {$|\varepsilon \B X|$};
\node[below=of F] (A) {$|\B Y|$.};

\draw[->] (E)--(F) node [midway,above] {$i_X$};
\draw[->] (E)--(A) node [midway,left] {$ h \ \ $};
\draw[->] (F)--(A) node [midway,right] {$\varepsilon h$};

\end{tikzpicture}
\end{center} 
\normalfont (ii) 
\itshape Every completely separated set is the quotient of the free completely separated set over it.\\[1mm]
\normalfont (iii) 
\itshape Let the functor $\Free \colon \Set \to \SetComplSep$, defined by 
$$\Free(X, =_X) := \varepsilon \B X, \ \ \ \Free(f \colon X \to Y) := \varepsilon f \colon |\varepsilon \B X| 
\to |\varepsilon \B Y|$$
\begin{center}
\begin{tikzpicture}

\node (E) at (0,0) {$X$};
\node[right=of E] (F) {$|\varepsilon \B X|$};
\node[below=of E] (A) {$Y$};
\node [right=of A] (D) {$|\varepsilon \B Y|$,};

\draw[->] (E)--(F) node [midway,above] {$i_X$};
\draw[->] (E)--(A) node [midway,left] {$ f $};
\draw[->] (A)--(D) node [midway,below] {$i_Y $};
\draw[->] (F)--(D) node [midway,right] {$\varepsilon f := \varepsilon (i_Y \circ f)$};
\draw[->] (E)--(D) node [left] {};

\end{tikzpicture}
\end{center} 
and let the forgetful functor $\Frg \colon \SetComplSep \to \Set$, defined by
$$\Frg\big(X, =_X, \neq_X ; F\big) := (X, =_X), \ \ \ \Frg(h \colon \B X \to \B Y) := h.$$
Then, $\Free$ is left adjoint to $\Frg$.
\end{theorem}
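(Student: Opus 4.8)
The plan is to prove (i) first, since (ii) and (iii) then follow almost formally. On underlying elements the carrier of $\varepsilon \B X$ is again $X$; only its equality is relaxed from $=_X$ to $=_{(X, \D F(X))}$. By Remark~\ref{rem: f1}(i) the identity-on-elements assignment $i_X \colon X \to |\varepsilon \B X|$ does preserve equality, since $x =_X x{'} \To x =_{(X, \D F(X))} x{'}$, so it is a legitimate function in $\Set$. Given a completely separated set $\B Y := (Y, =_Y, \neq_Y ; G)$ and a function $h \colon X \to |\B Y|$, I would define $\varepsilon h$ to act as $h$ itself on elements; the entire content is to check that this assignment is now a \emph{strongly extensional} function out of $\varepsilon \B X$.

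For well-definedness with respect to $=_{(X, \D F(X))}$, fix $x, x{'} \in X$ with $x =_{(X, \D F(X))} x{'}$, that is $f(x) =_{\Real} f(x{'})$ for every $f \in \D F(X)$. For each $g \in G$ the composite $g \circ h$ lies in $\D F(X)$, so $g(h(x)) =_{\Real} g(h(x{'}))$ for all $g \in G$; since $G$ separates the points of $Y$, this yields $h(x) =_Y h(x{'})$. The same composition trick gives strong extensionality: if $h(x) \neq_Y h(x{'})$, then, using that $\neq_Y$ equals $\neq_{(Y, G)}$, some $g \in G$ satisfies $g(h(x)) \neq_{\Real} g(h(x{'}))$, and then $g \circ h \in \D F(X)$ witnesses $x \neq_{(X, \D F(X))} x{'}$. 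Commutativity $\varepsilon h \circ i_X = h$ is immediate because both sides act as $h$ on elements, and uniqueness follows since any strongly extensional $k$ with $k \circ i_X = h$ must agree with $h$ pointwise, hence equal $\varepsilon h$ in the pointwise equality of the function set.

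For (ii), I would apply (i) to $\id_{|\B Y|} \colon |\B Y| \to |\B Y|$, obtaining a canonical strongly extensional map $q := \varepsilon(\id_{|\B Y|}) \colon \varepsilon \B Y \to \B Y$ that is the identity on elements and onto. Here one notes that the carriers of $\varepsilon \B Y$ and $\B Y$ even coincide as sets with equality: $y =_Y y{'} \To y =_{(Y, \D F(Y))} y{'}$ by Remark~\ref{rem: f1}(i), while $G \subseteq \D F(Y)$ together with Remark~\ref{rem: f2}(i) gives the reverse implication $y =_{(Y, \D F(Y))} y{'} \To y =_{(Y, G)} y{'}$, and $=_{(Y, G)}$ coincides with $=_Y$ by complete separation. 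Thus $q$ exhibits $\B Y$ as a quotient of the free completely separated set on its underlying set, the only change being the passage from the full function set $\D F(Y)$ to the separating subset $G$.

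For (iii), I would first check that $\Free$ is a functor: $\Free(\id) = \id$ and $\Free(g \circ f) = \Free(g) \circ \Free(f)$ both follow from the uniqueness clause of (i), since the relevant composites satisfy the defining triangle. The adjunction $\Free \dashv \Frg$ is then precisely the statement that (i) provides, naturally in both arguments, a bijection
\[ \Hom_{\SetComplSep}\big(\Free(X, =_X), \B Y\big) \ \cong \ \Hom_{\Set}\big((X, =_X), \Frg \B Y\big), \]
sending a strongly extensional $k$ to $k \circ i_X$ and a function $h$ to $\varepsilon h$; these are mutually inverse by the existence-and-uniqueness in (i), with unit $i_X$ and counit the map $q$ of (ii). I expect the only genuinely fiddly part to be the naturality squares and the functoriality of $\Free$, because $\Free$ acts on an arrow $f \colon X \to Y$ through the detour $\varepsilon(i_Y \circ f)$; here I would simply invoke uniqueness in (i) repeatedly. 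By contrast, the core well-definedness and strong extensionality of $\varepsilon h$ are a clean consequence of the codomain being completely separated, so the substance of the whole theorem is concentrated in part (i).
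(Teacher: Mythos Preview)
Your proof is correct and follows essentially the same route as the paper's: set $i_X := \id_X$ and $\varepsilon h := h$ on elements, then obtain well-definedness and strong extensionality by composing with each $g \in G$ to land back in $\D F(X)$; parts (ii) and (iii) then follow formally from (i). Your part (ii) even records slightly more than the paper, namely that $=_Y$ and $=_{(Y,\D F(Y))}$ coincide on a completely separated $\B Y$, so the quotient map is the identity on underlying sets-with-equality.
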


\begin{proof}
(i) Let $i_X := \id_X$ and $\varepsilon h := h$. That $\id_X$ is a function follows
from Remark~\ref{rem: f1}(i). That
$h \colon  |\varepsilon \B X| \sto |\B Y|$ is a function, follows from the 
fact that every operation from $(X, =_{(X, \D F(X))})$ to $\B Y := (Y, =_Y, \neq_Y ; G)$ is a function; if 
$x, x{'} \in X$ such that $x =_{(X, \D F(X))} x{'}$, then $h(x) =_Y h(x{'}) \TOT h(x) =_{(Y, G)} h(x{'})$, and if 
$g \in G$, then $g \circ h \in \D F(X)$ and the required equality follows from the hypothesis $x =_{(X, \D F(X))} x{'}$.
That $h \colon  |\varepsilon \B X| \sto |\B Y|$ is strongly extensional, is shown as follows: if
$x, x{'} \in X$, and if $g \in G$ such that $g \colon h(x) =_{(Y, G)} h(x{'})$, then $F \ni g \circ h \colon x 
\neq _{(X, \D F(X))} x{'}$. The commutativity of the triangle and the uniqueness of $\varepsilon h$ follow immediately. \\
(ii) If $\B X := \big(X, =_X, \neq_X ; F\big)$, then the identity $\id_X$ from $\varepsilon \B X$ to $\B X$ is the
required surjection.\\
(iii) Clearly, $\Free$ and $\Frg$ are functors. Let $i_{X, \B Y} \colon \Hom(\varepsilon \B X, \B Y)
\to \Hom(X, \Frg(\B Y))$ defined by $i_{X, \B Y} (h) := h$, for every $h \in \Hom(\varepsilon \B X, \B Y)$.
To show that $i_{X, \B Y}$
is well-defined, let $x, x{'} \in X$ with $x =_X x{'}$, hence $x =_{{(X, \D F(X))}} x{'}$. By our hypothesis on $h$,
we get $h(x) =_Y h(x{'})$.  The fact that $i_{X, \B Y}$ is a function, is trivial to show.  Let $j_{X, \B Y} \colon 
\Hom(X, \Frg(\B Y)) \to \Hom(\varepsilon \B X, \B Y)$ defined by $j_{X, \B Y} (h) := \varepsilon h$, for every 
$h \in \Hom(X, \Frg(\B Y))$, where $\varepsilon h$ is determined by the universal property of $\varepsilon \B X$ as follows:
\begin{center}
\begin{tikzpicture}

\node (E) at (0,0) {$X$};
\node[right=of E] (F) {$|\varepsilon \B X|$};
\node[below=of E] (A) {$Y$};
\node [right=of A] (D) {$ \ |\B Y|$.};

\draw[->] (E)--(F) node [midway,above] {$i_X$};
\draw[->] (E)--(A) node [midway,left] {$ h $};
\draw[->] (A)--(D) node [midway,below] {$\id_Y $};
\draw[->] (F)--(D) node [midway,right] {$\varepsilon h := \varepsilon (\id_Y \circ h)$};
\draw[->] (E)--(D) node [left] {};

\end{tikzpicture}
\end{center} 
Clearly, $j_{X, \B Y}$ is a well-defined function, and $(i_{X, \B Y}, j_{X, \B Y})$ witness 
the equality of the two Hom-sets. If $\phi \colon X{'} \to X$, the commutativity of the rectangle
\begin{center}
\begin{tikzpicture}

\node (E) at (0,0) {$ \Hom(\varepsilon \B X, \B Y) $};
\node[right=of E] (K) {};
\node[right=of K] (F) {$ \ \ \Hom(X, \Frg(\B Y))$};
\node[below=of E] (A) {$\Hom(\varepsilon \B X{'}, \B Y)$};
\node [right=of A] (L) {};
\node [right=of L] (D) {$\Hom(X{'}, \Frg(\B Y))$,};

\draw[->] (E)--(F) node [midway,above] {$i_{X, \B Y}$};
\draw[->] (E)--(A) node [midway,left] {$ \Free(\phi)^* $};
\draw[->] (A)--(D) node [midway,below] {$i_{X{'}, \B Y} $};
\draw[->] (F)--(D) node [midway,right] {$\phi^*$};

\end{tikzpicture}
\end{center} 
where $\phi^*(h) := h \circ \phi =: \Free(\phi)^*(h)$, follows easily. Similarly, if $\theta \colon \B Y \to 
\B {Y{'}}$, 
the rectangle 
\begin{center}
\begin{tikzpicture}

\node (E) at (0,0) {$ \Hom(\varepsilon \B X, \B Y) $};
\node[right=of E] (K) {};
\node[right=of K] (F) {$ \ \ \Hom(X, \Frg(\B {Y}))$};
\node[below=of E] (A) {$\Hom(\varepsilon \B X, \B Y{'})$};
\node [right=of A] (L) {};
\node [right=of L] (D) {$\Hom(X, \Frg(\B Y{'}))$};

\draw[->] (E)--(F) node [midway,above] {$i_{X, \B Y}$};
\draw[->] (E)--(A) node [midway,left] {$\theta_* $};
\draw[->] (A)--(D) node [midway,below] {$i_{X, \B Y{'}} $};
\draw[->] (F)--(D) node [midway,right] {$\Frg(\theta)_*$};

\end{tikzpicture}
\end{center} 
is commutative, where $\theta_*(h) := \theta \circ h =: \Frg(\theta)_*(h)$.
\end{proof}

Theorem~\ref{thm: free1}(i) is the $\BST$-analogue to the (intentional) type-theoretic fact that the setoid $(X, =_X)$, where
$=_X \colon X \to X \to \C U$ is the equality type-family  on the type $X \colon \C U$, is the free 
setoid on $X$ (see~\cite{CDPS05}, p.~74). Its simple proof rests on the elimination axiom for $=_X$,
which is a proof-relevant formulation of the least-reflexive property of $=_X$ within intensional $\MLTT$,
a property that cannot be translated directly in $\BST$ (see also~\cite{Pe20}, p.~2 and p.~7). 
As we have noticed after Remark~\ref{rem: f2}, the induced equality $=_{(X, \D F(X))}$ is in $\BST$ the ``smallest'' equality 
on $X$ induced by real-valued functions on $(X, =_X)$.

\section{A set-theoretic Stone-\v{C}ech theorem}
\label{sec: SC}

A completely regular topological space $(X, \mathcal{T})$ is one in which any pair $(x, B)$, where
$B$ is closed and $x \notin B$, is separated by some $f \in C(X, [0, 1])$.  The ring of real-valued,
continuous functions $C(X)$ of a completely regular and $T_{1}$-space $X$,
also known as a \textit{Tychonoff space}, separates the points of $X$, that is
$$\forall_{x, x{'} \in X}\big(\forall_{f \in C(X)}(f(x) = f(x{'})) \Rightarrow x = x{'}\big).$$
The ``sufficiency'' of the completely regular topological spaces in the theory of $C(X)$
is provided by the Stone-\v{C}ech theorem, according to which, for every topological 
space $X$ there exists a completely regular space $\rho X$ and a continuous mapping $\tau_X \colon X \rightarrow \rho X$ 
such that the induced function $f \mapsto \tau^{*}_X(f)$, where
$\tau^{*}_X(f) = f \circ \tau_X$, is a ring isomorphism between $C(\rho X)$ and $C(X)$ (see~\cite{GJ60}, p.41).
\begin{center}
\begin{tikzpicture}

\node (E) at (0,0) {$X$};
\node[right=of E] (F) {$\rho X$};
\node [below=of F] (D) {$\Real$};
\node[right=of F] (P) {};
\node[right=of P] (S) {};
\node[right=of S] (K) {$X$};
\node [right=of K] (L) {$\rho X$};
\node [below=of L] (M) {$Y$};

\draw[->] (E)--(F) node [midway,above] {$ \tau_X$};
\draw[->] (E)--(D) node [midway,left] {$C(X) \ni \tau^{*}_X(f) \ \ $};
\draw[->] (F)--(D) node [midway,right] {$f \in C(\rho X) $};
\draw[->] (K)--(M) node [midway,left] {$C(X, Y) \ni h \ \ $};
\draw[->] (K)--(L) node [midway,above] {$\tau_X$};
\draw[->] (L)--(M) node [midway,right] {$ \rho h \in C(\rho X, Y)$};

\end{tikzpicture}
\end{center} 
Consequently, a functor $\rho \colon \Top \to \crTop$ from the category of topological spaces $\Top$ to 
its subcategory of completely regular topological spaces $\crTop$ is defined, which is a reflector,
that is for every continuous function $h$ from $X$ to a completely regular space $Y$ there is a unique continuous
function $\rho h \colon \rho X \to Y$ such that the above right triangle commutes (see~\cite{He68}, p.~6).
Next we present an abstract version of the Stone-\v{C}ech theorem for completely separated 
sets,
which expresses the corresponding ``sufficiency'' of completely separated sets. 
Topological spaces are replaced by function spaces and completely regular 
topological spaces by completely separated sets\footnote{The adverb ''completely`` in the term completely separated 
set comes from this analogy.}.
The category of function spaces was introduced 
by Ishihara in~\cite{Is13} without restricting though, to extensional subsets of $\D F(X)$.

\begin{definition}\label{def: fs}
A function space is a structure $(X, =_X ; F)$, where $(X, =_X)$ is a set and $F$ is an extensional subset of
$\D F(X)$. If $=_X$ is clear from the context, we also write $(X, F)$. 
In the category $\FunSpace$ of function spaces the arrows are the affine maps, defined in 
Definition~\ref{def: fsets}.
\end{definition}

\begin{theorem}[Stone-\v{C}ech theorem for function spaces and completely separated sets]\label{thm: sc} 
Let $(X, =_X ; F)$ be a function space.\\[1mm]
\normalfont (i)
\itshape  There is a completely separated 
set $\rho_F \B X := \big(X, =_{(X,F)}, \neq_{(X, F)} ; \rho F\big)$ and a function $\tau_X \colon X \to 
|\rho_F \B X|$ such that $(\tau^*_X, \rho_X) \colon F = _{\D V_0} \rho F$, where $\tau^{*}_X \colon \rho F \to 
F$ is defined by $\tau^{*}_X(g) := g \circ \tau_X$, for every $g \in \rho F$, and $\rho_X \colon F \to \rho F$ is
defined by $\rho_X (f) := f$, for every $f \in F$. 
Moreover, every function $f$ in $\rho F$ is strongly extensional.
\begin{center}
\begin{tikzpicture}

\node (E) at (0,0) {$X$};
\node[right=of E] (F) {$|\rho_F \B X|$};
\node [below=of F] (D) {$\Real$};
\node[right=of F] (P) {};
\node[right=of P] (N) {};
\node[right=of N] (S) {};
\node[right=of S] (K) {$X$};
\node [right=of K] (L) {$|\rho_F \B X|$};
\node [below=of L] (M) {$|\B Y|$};

\draw[->] (E)--(F) node [midway,above] {$ \tau_X$};
\draw[->] (E)--(D) node [midway,left] {$F \ni \tau^{*}_X(f) \ \ $};
\draw[->] (F)--(D) node [midway,right] {$f \in \rho F$};
\draw[->] (K)--(M) node [midway,left] {$\Aff\big((X ; F), (Y ; G)\big) \ni h \ \ $};
\draw[->] (K)--(L) node [midway,above] {$\tau_X$};
\draw[->] (L)--(M) node [midway,right] {$ \rho h \in \Aff\big((X ; \rho F), (Y ; G)\big)$};

\end{tikzpicture}
\end{center} 
\normalfont (ii)
\itshape If $\B Y := (Y, =_Y, \neq_Y ; G)$ is in $\SetComplSep$ and $h \colon (X, =_X ; F) \to (Y, =_Y ; G)$ is an
affine map, 
there is a unique affine map 
$\rho h \colon |\rho_F  \B X| \to |\B Y|$, such that
the above right triangle commutes.

\end{theorem}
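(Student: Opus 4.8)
The plan is to realise every arrow in the diagram as an identity at the level of carriers, so that the only genuine content is checking that the coarser induced equality $=_{(X,F)}$ is respected. Concretely, I would set $\tau_X := \id_X$. By Remark~\ref{rem: f1}(i) we have $x =_X x{'} \To x =_{(X,F)} x{'}$, so $\id_X$ is indeed a function from $(X,=_X)$ to $(X,=_{(X,F)})$. I would then take $\rho F$ to be $F$ regarded as a subset of the real-valued functions on $(X,=_{(X,F)})$: each $f \in F$ respects $=_{(X,F)}$, since $x =_{(X,F)} x{'}$ gives $f(x) =_{\Real} f(x{'})$ upon taking the witness $f$ itself in the definition of $=_{(X,F)}$. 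The structure $\rho_F \B X = (X, =_{(X,F)}, \neq_{(X,F)} ; \rho F)$ is completely separated by Remark~\ref{rem: f1}(iii) (tightness of $\neq_{(X,F)}$ with respect to $=_{(X,F)}$), since $\neq_{(X,F)}$ is by construction the inequality induced by $\rho F$. Strong extensionality of each $f \in \rho F$ is immediate: $f(x) \neq_{\Real} f(x{'})$ is itself a witness for $x \neq_{(X,F)} x{'}$.

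For part (i) I would verify that $(\tau^*_X, \rho_X)$ is a mutually inverse pair. Because $\tau_X$ is the identity assignment routine, $\tau^*_X(g) = g \circ \tau_X = g$ and $\rho_X(f) = f$, so both maps are the identity on the common underlying family of assignment routines. The pointwise equalities on $F$ and on $\rho F$ coincide, since both quantify $x$ over the same carrier and impose $=_{\Real}$ pointwise; hence $\tau^*_X$ and $\rho_X$ preserve equality and satisfy $\tau^*_X \circ \rho_X = \id_F$ and $\rho_X \circ \tau^*_X = \id_{\rho F}$. This yields $(\tau^*_X, \rho_X) \colon F =_{\D V_0} \rho F$, and the left triangle commutes by the very definition of $\tau^*_X$.

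For part (ii) I would set $\rho h := h$ at the carrier level and check the three required facts. First, $\rho h$ is a function $(X, =_{(X,F)}) \to (Y, =_Y)$: if $x =_{(X,F)} x{'}$, then for every $g \in G$ affineness of $h$ gives $g \circ h \in F$, whence $g(h(x)) =_{\Real} g(h(x{'}))$, i.e. $h(x) =_{(Y,G)} h(x{'})$; since $\B Y$ is completely separated, $=_Y$ equals $=_{(Y,G)}$, so $h(x) =_Y h(x{'})$. Second, $\rho h$ is affine from $(X ; \rho F)$ to $(Y ; G)$, for if $g \in G$ then $g \circ \rho h = g \circ h \in F = \rho F$. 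Third, commutativity $\rho h \circ \tau_X = h$ is trivial, as $\tau_X = \id_X$. For uniqueness, if $k \colon |\rho_F \B X| \to |\B Y|$ is affine with $k \circ \tau_X = h$, then $k(x) = h(x)$ for every $x \in X$ since the carriers agree, so $k$ and $\rho h$ are equal in the relevant function space.

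The only real content, and thus the main point to get right, is the function-likeness of $\rho h$ in part (ii): this is precisely where affineness of $h$ (to move across $g \circ h \in F$) and the complete separation of $\B Y$ (so that $=_Y$ is exactly the equality induced by $G$) are both needed. Everything else is forced by the underlying carrier being literally unchanged, which is what makes all the maps identities and the proof negation-free.
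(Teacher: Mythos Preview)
Your proposal is correct and follows essentially the same approach as the paper: set $\tau_X := \id_X$ and $\rho h := h$, and verify that everything else is forced. The only cosmetic difference is that the paper defines $\rho F$ explicitly as the extensional subset $\{f \colon (X, =_{(X,F)}) \to \Real \mid f \circ \tau_X \in F\}$ of $\D F(X, =_{(X,F)})$ and then checks $=_{(X,\rho F)} \Leftrightarrow =_{(X,F)}$ (and likewise for $\neq$), whereas you identify $\rho F$ with $F$ directly; since $\tau_X$ is the identity these descriptions coincide, and your informal identification is harmless.
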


\begin{proof}
(i) Let the assignment routine $\tau_X \colon X \sto X$ given by the identity rule $\tau_X (x) := x$, 
for every $x \in X$. If $X$ is equipped with the equality 
$=_{(X,F)}$, let $\rho F$ be the following extensional subset of $\D F(X)$
$$\rho F := \big\{f \colon (X, =_F) \to (\Real, =_{\Real}) \mid f \circ \tau_X \in F\big\}.$$
Clearly, the extensionality of $F$ implies the extensionality of $\rho F$, and 
$\rho_F \B X$ is in $\SetComplSep$ as
\begin{align*}
x =_{(X, \rho F)} x{'} & :\TOT \forall_{f \in \rho F}\big(f(x) =_{\Real} f(x{'})\big)\\
&  :\TOT \forall_{f \in \rho F}\big((f \circ \tau_X)(x) =_{\Real} (f \circ \tau_X)(x{'})\big)\\
& \TOT \forall_{f \in F}\big(f(x) =_{\Real} f(x{'})\big)\\
& \TOT: x =_{(X,F)} x{'}.
\end{align*}
In the non-definitional equivalence above we use the fact that if $f \in \rho F$, then $f \circ \tau_X = f \in F$, and
if $f \in F$, then $f \circ \tau_X = f \in F$, and hence $f \in \rho F$.
Similarly we show that 
$x \neq _{(X, \rho F)} x{'} \TOT x \neq_{(X,F)} x{'}$.
The facts that $\tau_X$ is a function and  the pair of functions $(\tau^*_X, \rho_X)$ witness the 
equality $F = _{\D V_0} \rho F$ follow immediately.
The strong extensionality of each $f \in \rho F$ follows immediately from the equivalence of $\neq _{(X, \rho F)}$
and $\neq_{(X,F)}$.\\
(ii) Let $\rho h := h$. To show that $\rho h$ is a function, let $x =_{(X, F)} x{'} \TOT 
x =_{(X, \rho F)} x{'}$. If $g \in G$, then $g(\rho h(x)) :=
g(h(x)) = _{\Real} g(h(x{'})) =: g(\rho h(x{'}))$, as $g \circ h \in F$, hence $g \circ h \in \rho F$.
Consequently, $h(x) =_Y h(x{'})$. The fact that $ \rho h \in \Aff\big((X ; \rho F), (Y ; G)\big)$ follows immediately.
The uniqueness of $\rho h$ follows trivially from the definition of $\tau_X$ and the required commutativity of the above 
right triangle.
\end{proof}

From a given set $(X, =_X)$ and an extensional subset $F$ of $\D F(X)$ we constructed a completely separated set 
$\rho_F \B X$ with the same carrier set $X$ and a larger equality $=_{(X, \rho F)} \TOT =_{(X, F)}$, such that 
all functions that separate the points of $(X, =_{(X, \rho F)})$ are strongly extensional. Of course, 
the inequality $\neq_{(X, F)}$ is tight with respect to the equality  $=_{(X, F)}$.
In the proof of the Stone-\v{C}ech theorem for topological spaces and completely regular spaces $\rho X$ is the the set
of equivalence classes of $X$ with respect to the equivalence relation $x \sim x{'} :\TOT \forall_{f \in C(X)}\big(f(x) =
f(x{'})\big)$. Following~\cite{MRR88}, p.~38, the quotient of a set over an equivalence relation is treated here
as the same totality with the equivalence relation as its new equality. 
Clearly, if $\B X$ is in $\SetComplSep$, the above construction on $(X, =_F)$ induces $\B X$ again.

\begin{proposition}\label{prp: rho}
Let the functor $\rho \colon \FunSpace \to \SetAffine$, defined by
$\rho(X, =_X ; F) := \rho_F \B X$ and $\rho\big(h \colon (X, =_X ; F) \to (Y, =_Y ; G)\big) := \rho h \colon \rho_F \B X 
\to \rho_F \B Y$, where, according to Theorem~\ref{thm: sc}$($ii$)$, $\rho h$ is the unique function that makes 
the following rectangle commutative
\begin{center}
\begin{tikzpicture}

\node (E) at (0,0) {$X$};
\node[right=of E] (K) {};
\node[right=of K] (F) {$|\rho_F \B X|$};
\node[below=of E] (B) {$Y$};
\node[right=of B] (C) {};
\node[right=of C] (A) {$|\rho_F \B Y|$.};

\draw[->] (E)--(F) node [midway,above] {$\tau_X$};
\draw[->] (E)--(B) node [midway,left] {$ h $};
\draw[->] (F)--(A) node [midway,right] {$\rho h$};
\draw[->] (B)--(A) node [midway,below] {$\tau_Y$};
\draw[->] (E)--(A) node [midway,above] {};

\end{tikzpicture}
\end{center}  
If $\Emb \colon \SetAffine \to \FunSpace $ is the
corresponding 
embedding functor, defined by 
$\Emb(\B X) := (X, =_X ; F)$ and $\Emb\big(h \colon (\B X ; F) \to (\B Y ; G)\big) := h$, then $\rho$ is left adjoint to
$\Emb$, and $\Emb$ is left adjoint to $\rho$.
\end{proposition}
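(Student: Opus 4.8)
The plan is to establish each of the two adjunctions by producing a natural bijection of hom-sets, exploiting two facts already in hand: the universal property of $\tau_X$ proved in Theorem~\ref{thm: sc}(ii), and the observation recorded just after that proof that $\rho\circ\Emb=\Id_{\SetAffine}$, since applying the $\rho$-construction to an already completely separated set returns it unchanged. A useful preliminary is that both functors act as the identity on underlying carriers and that every structural map in sight is carried by an identity rule: $\tau_X$ is the identity rule, and on arrows $\rho h=h$ as a rule. Consequently the units, counits and comparison maps I introduce are all identity rules, and the whole content of the proof lies in checking \emph{well-definedness}, i.e.\ that a given identity rule is a legitimate affine map for the equalities and function-sets at issue.

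For $\rho\dashv\Emb$ I take $\tau_X\colon(X,=_X;F)\to\Emb(\rho_F\B X)$ as the unit at the function space $(X,=_X;F)$; it is affine because for $g\in\rho F$ one has $g\circ\tau_X=g\in F$ by the very definition of $\rho F$. Theorem~\ref{thm: sc}(ii) is then exactly the required universal property: for every $\B Y\in\SetAffine$ and every affine $h\colon(X,=_X;F)\to\Emb\B Y$ there is a unique affine $\rho h\colon\rho_F\B X\to\B Y$ with $\Emb(\rho h)\circ\tau_X=h$. This yields the bijection
$$\Hom_{\SetAffine}(\rho_F\B X,\B Y)\;\cong\;\Hom_{\FunSpace}\big((X,=_X;F),\Emb\B Y\big),\qquad \bar h\mapsto \Emb(\bar h)\circ\tau_X,$$
with inverse $h\mapsto\rho h$. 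Naturality in the two variables reduces, via functoriality of $\rho$ (its defining rectangle) and the fact that $\tau$ is an identity rule, to equalities between identity-rule composites, which I would verify directly. The only non-formal step, passing from an affine map into $\Emb\B Y$ back to one into $\B Y$, is handled by Theorem~\ref{thm: sc}(ii), and it uses crucially that the codomain $\B Y$ is completely separated, so that $=_Y$ already coincides with the equality induced by $G$.

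For $\Emb\dashv\rho$ I would use $\rho\circ\Emb=\Id_{\SetAffine}$, so the prospective unit $\Id_{\SetAffine}\Rightarrow\rho\Emb$ is the identity, and aim for the bijection
$$\Hom_{\FunSpace}\big(\Emb\B X,(Y,=_Y;G)\big)\;\cong\;\Hom_{\SetAffine}\big(\B X,\rho_G\B Y\big),\qquad h\mapsto\rho h=h,$$
for $\B X=(X,=_X,\neq_X;F)\in\SetAffine$ and $(Y,=_Y;G)\in\FunSpace$. The forward direction is routine: an affine $h\colon\Emb\B X\to(Y,=_Y;G)$ respects $=_Y$ and hence the coarser $=_{(Y,G)}$, while the affineness conditions ``$g\circ h\in F$ for all $g\in G$'' and ``$g\circ h\in F$ for all $g\in\rho G$'' coincide because $\rho G$ and $G$ carry the same underlying rules. \textbf{The main obstacle is the reverse direction}: I must show that an affine map $\phi\colon\B X\to\rho_G\B Y$, which a priori only respects the induced equality $=_{(Y,G)}$ and is $\rho G$-affine, already corestricts to an affine map $\Emb\B X\to(Y,=_Y;G)$, i.e.\ respects the finer equality $=_Y$, and that the correspondence is injective, so that $=_{(Y,G)}$-equal affine maps are $=_Y$-equal. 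In contrast with $\rho\dashv\Emb$, here the codomain is a \emph{bare} function space whose equality need not coincide with the induced one, so this descent is exactly the delicate point; I expect it to require exploiting the completely separated structure of the domain $\B X$ together with the $\rho G$-affineness of $\phi$ to pin $\phi$ down up to $=_Y$. Once the bijection is secured, the triangle identities follow immediately from the identity-rule description of the unit and counit, completing the proof.
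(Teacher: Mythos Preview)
Your approach to $\rho\dashv\Emb$ is the same as the paper's: both simply invoke the universal property of $\tau_X$ established in Theorem~\ref{thm: sc}(ii) to exhibit $\SetAffine$ as reflective in $\FunSpace$.

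For $\Emb\dashv\rho$ the paper also argues via the identity-rule correspondence between
\[
\Aff\big((X,=_X;F),(Y,=_Y;G)\big)\quad\text{and}\quad\Aff\big((X,=_X,\neq_X;F),(Y,=_{(Y,G)},\neq_{(Y,G)};\rho G)\big),
\]
but it checks \emph{only} the affineness condition. For the converse direction the paper's entire argument is: given $h$ in the right-hand set and $g\in G$, one has $g\circ h=\rho_Y(g)\circ h\in F$ because $\rho_Y(g)\in\rho G$; hence $h$ lies in the left-hand set. It does \emph{not} address the point you single out as ``the main obstacle'', namely that an $h$ which is a function into $(Y,=_{(Y,G)})$ need not be a function into $(Y,=_Y)$, nor that the two hom-sets carry different pointwise equalities ($=_Y$ versus $=_{(Y,G)}$).

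Your instinct here is sound: for a bare function space $(Y,=_Y;G)$ the implication $y=_{(Y,G)}y'\Rightarrow y=_Y y'$ is exactly tightness of $\neq_{(Y,G)}$, which is \emph{not} assumed. Equivalently, the would-be counit $\Emb(\rho_G\B Y)\to(Y,=_Y;G)$ given by the identity rule requires precisely this implication. So the gap you isolate is present in the paper's own proof; the paper simply does not confront it, and your hoped-for rescue via ``the completely separated structure of the domain'' cannot help, since the obstruction lives entirely on the codomain side. Without an additional hypothesis on $(Y,=_Y;G)$ the second adjunction, as stated, appears not to go through.
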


\begin{proof}
The fact that $\rho$ is a functor follows immediately by Theorem~\ref{thm: sc}$($ii$)$. From that also follows that 
$\SetAffine$ is reflective in $\FunSpace$, or equivalently, that $\rho$ is left adjoint to
$\Emb$. Next we show that $\Emb$ is left adjoint to $\rho$. We have that 
$$\Hom\big((\Emb(\B X), (Y, =_Y ; G)\big) := \Aff\big((X, =_X ; F), (Y, =_Y ; G)\big),$$
$$\Hom\big(\B X, \rho(Y, =_Y ; G)\big) := \Aff\big((X, =_X, \neq_X ; F), (Y, =_G, \neq_G ; \rho G)\big).$$
If $h \colon X \to Y$ is in $\Aff\big((X, =_X ; F), (Y, =_Y ; G)\big)$, then, for every $g \in \rho G$ we have that
$g \circ h = (g \circ \tau_Y) \circ h \in F$, as $g \circ \tau_Y \in G$ by the definition of $\rho G$. Hence,
$h \in \Aff\big((X, =_X, \neq_X ; F), (Y, =_G, \neq_G ; \rho G)\big)$.
Conversely, if $h \in \Aff\big((X, =_X, \neq_X ; F), (Y, =_G, \neq_G ; \rho G)\big)$ and $g \in G$, we have that
$g \circ h = \rho_Y(g) \circ h \in F$, as $\rho_Y(g) \in \rho G$. Hence $h \in \Aff\big((X, =_X ; F), (Y, =_Y ; G)\big)$.
If $\phi \colon \B X{'} \to \B X$ and $\theta \colon (Y, =_Y ; G) \to (Y{'}, =_{Y{'}} ; G{'})$ are affine 
\begin{center}
\begin{tikzpicture}

\node (E) at (0,0) {$ \Hom(\Emb(\B X), (Y, =_Y ; G)) $};
\node[right=of E] (K) {};
\node[right=of K] (F) {$ \ \Hom(\B X, \rho_G \B Y)$};
\node[below=of E] (A) {$ \ \Hom(\Emb(\B X{'}), (Y, =_Y ; G)$};
\node [right=of A] (L) {};
\node [right=of L] (D) {$ \Hom(\B X{'}, \rho_G \B Y) $};

\draw[->] (E)--(F) node [midway,above] {$\id$};
\draw[->] (E)--(A) node [midway,left] {$ \Emb(\phi)^* $};
\draw[->] (A)--(D) node [midway,below] {$\id$};
\draw[->] (F)--(D) node [midway,right] {$\phi^*$};

\end{tikzpicture}
\end{center} 
\begin{center}
\begin{tikzpicture}

\node (E) at (0,0) {$  \Hom(\Emb(\B X), (Y, =_Y ; G)) \ \ $};
\node[right=of E] (K) {};
\node[right=of K] (F) {$ \ \ \ \Hom(\B X, \rho_G \B Y)$};
\node[below=of E] (A) {$ \ \ \Hom(\Emb(\B X), (Y{'}, =_{Y{'}} ; G{'}))$};
\node [right=of A] (L) {};
\node [right=of L] (D) {$ \Hom(\B X, \rho_{G{'}} \B Y{'})$};

\draw[->] (E)--(F) node [midway,above] {$\id$};
\draw[->] (E)--(A) node [midway,left] {$\theta_* $};
\draw[->] (A)--(D) node [midway,below] {$\id$};
\draw[->] (F)--(D) node [midway,right] {$(\rho \theta)_*$};

\end{tikzpicture}
\end{center} 
the commutativity of the above rectangles is straightforward to show.
\end{proof}

It is not often the case that we have functors like $\rho$ and $\Emb$, such that $\rho \dashv \Emb \dashv \rho$.
For example, although in Theorem~\ref{thm: free1}(iii) we showed that $\Free \dashv \Frg$, we cannot show that 
$\Frg \dashv \Free$. Because of the relations $\rho \dashv \Emb \dashv \rho$, we have that $\rho$ preserves all limits 
and colimits. For example, as $(X \times Y, =_{X \times Y} ; F \otimes G)$ is the product of
$(X, =_X ; F)$ and $(Y, =_Y,; G)$ in $\FunSpace$, we have that $\rho_{F \otimes G}(\B X \times \B Y) =
\rho_F(\B X) \times \rho_G(\B Y)$.

\section{A set-theoretic Tychonoff embedding theorem}
\label{sec: TET}

According to the classical Tychonoff embedding theorem, a $T_1$ topological space is completely regular if and only if it 
is topologically embedded into a product of $[0, 1]$ equipped with its standard topology. Here we give
a purely set-theoretic 
formulation of this result within $\BST$. As the separating functions considered here take values in $\Real$, the 
corresponding product will be a (dependent) product of $\Real$. As in the previous section, we replace 
complete regularity by complete separation.

\begin{lemma}\label{lem: lel}
 Let  $\B X, \B I$ be in $\SetIneq$ and $\boldmath \Lambda := (\lambda_0, \lambda_1)$ a family of 
 sets with an inequality over $\B I$. 
 If $\boldmath M := (\B \mu_0, \mu_1)$, where $\B \mu_0 \colon I \sto \D V_0^{\neq}$ is defined by 
 $\overline{\mu}_0(i) := \big(\D F(X, \lambda_0(i)), =_{\D F(X, \lambda_0(i))}, 
 \neq_{\D F(X, \lambda_0(i))}\big)$
 and 
 \[ \mu_1 \colon \bigcurlywedge_{(i, j) \in D(I)}\D F^{\neq}\big(\D F(X, \lambda_0(i)), \D F(X, \lambda_0(j))\big), 
\ \ \ \mu_1(i, j) := \mu_{ij}, \ \ \ (i, j) \in D(I), \]
$$\mu_{ij} \colon \D F(X, \lambda_0(i)) \to \D F(X, \lambda_0(j)), \ \ \ \ 
\mu_{ij}(\phi) := \lambda_{ij} \circ \phi, \ \ \ \phi \in \D F(X, \lambda_0(i))$$
\begin{center}
\begin{tikzpicture}

\node (E) at (0,0) {$X$};
\node[right=of E] (F) {$\lambda_0(i)$};
\node[below=of F] (A) {$\lambda_0(j)$,};

\draw[->] (E)--(F) node [midway,above] {$\phi$};
\draw[->] (E)--(A) node [midway,left] {$ \mu_{ij}(\phi)  \ \ $};
\draw[->] (F)--(A) node [midway,right] {$\lambda_{ij}$};

\end{tikzpicture}
\end{center}
then $\boldmath M$ is a family of sets with an inequality over $\B I$.
\end{lemma}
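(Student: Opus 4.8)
The plan is to verify directly the three requirements of Definition~\ref{def: famofsets} for the pair $\B M := (\overline\mu_0, \mu_1)$: that each $\overline\mu_0(i)$ is an object of $\SetIneq$, that $\mu_1$ is a well-defined dependent operation landing in the sets of strongly extensional functions, and that the transport maps $\mu_{ij}$ satisfy the identity condition (a) and the commuting-triangle condition (b). Throughout I would reduce each claim about $\B M$ to the corresponding already-established property of $\B \Lambda$ and to the definition of the canonical inequality on a function space from Definition~\ref{def: canonical}.

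First I would check that $\overline\mu_0(i) = \big(\D F(X, \lambda_0(i)), =_{\D F(X,\lambda_0(i))}, \neq_{\D F(X, \lambda_0(i))}\big)$ is a set with an inequality. Since $\overline\lambda_0(i) \in \SetIneq$, the codomain carries an inequality $\neq_{\lambda_0(i)}$, so the canonical pointwise inequality $\phi \neq_{\D F(X, \lambda_0(i))} \psi :\TOT \exists_{x \in X}\big(\phi(x) \neq_{\lambda_0(i)} \psi(x)\big)$ is defined, and $(\Ineq_1)$ for it follows at once from $(\Ineq_1)$ for $\neq_{\lambda_0(i)}$: a witness $x$ of $\phi \neq \psi$ together with $\phi =_{\D F(X,\lambda_0(i))} \psi$ would give both $\phi(x) =_{\lambda_0(i)} \psi(x)$ and $\phi(x) \neq_{\lambda_0(i)} \psi(x)$, which is absurd. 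Next I would record that $\mu_{ij}(\phi) := \lambda_{ij} \circ \phi$ is a function: if $\phi =_{\D F(X,\lambda_0(i))} \psi$ then $\lambda_{ij}(\phi(x)) =_{\lambda_0(j)} \lambda_{ij}(\psi(x))$ for every $x$, because $\lambda_{ij}$ preserves equality, whence $\mu_{ij}(\phi) =_{\D F(X,\lambda_0(j))} \mu_{ij}(\psi)$.

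The one step carrying actual content is the strong extensionality of each $\mu_{ij}$, which is what places $\mu_1$ in the dependent product of the sets $\D F^{\neq}(\lambda_0(i),\lambda_0(j))$. Unwinding the canonical inequality, $\mu_{ij}(\phi) \neq_{\D F(X,\lambda_0(j))} \mu_{ij}(\psi)$ means $\exists_{x \in X}\big(\lambda_{ij}(\phi(x)) \neq_{\lambda_0(j)} \lambda_{ij}(\psi(x))\big)$; since $\lambda_{ij} \in \D F^{\neq}(\lambda_0(i), \lambda_0(j))$ is strongly extensional, the witness $x$ yields $\phi(x) \neq_{\lambda_0(i)} \psi(x)$, hence $\phi \neq_{\D F(X, \lambda_0(i))} \psi$. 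This is the only place where strong extensionality of the transport maps of $\B\Lambda$ is invoked, and I expect it to be the main, indeed the only non-bookkeeping, obstacle.

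Finally, conditions (a) and (b) reduce to the corresponding conditions for $\B\Lambda$ by plain composition. For (a), $\mu_{ii}(\phi) = \lambda_{ii}\circ\phi = \id_{\lambda_0(i)}\circ\phi = \phi$, using $\lambda_{ii} = \id_{\lambda_0(i)}$. For (b), when $i =_I j$ and $j =_I k$, I compute $(\mu_{jk}\circ\mu_{ij})(\phi) = \lambda_{jk}\circ(\lambda_{ij}\circ\phi) = (\lambda_{jk}\circ\lambda_{ij})\circ\phi = \lambda_{ik}\circ\phi = \mu_{ik}(\phi)$, using the commuting triangle for $\B\Lambda$. As a by-product, applying (b) to the triple $(i,j,i)$ gives $\mu_{ji}\circ\mu_{ij} = \mu_{ii} = \id$ and symmetrically $\mu_{ij}\circ\mu_{ji} = \id$, so $(\mu_{ij}, \mu_{ji}) \colon \overline\mu_0(i) =_{\D V_0^{\neq}} \overline\mu_0(j)$ whenever $i =_I j$, confirming that $\overline\mu_0$ is function-like and completing the verification that $\B M$ is a family of sets with an inequality over $\B I$.
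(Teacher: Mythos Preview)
Your proof is correct and follows essentially the same approach as the paper's: you verify conditions (a) and (b) by reducing to the corresponding identities for $\boldmath\Lambda$, and you establish strong extensionality of $\mu_{ij}$ from that of $\lambda_{ij}$ via the pointwise definition of the function-space inequality. You are simply more explicit than the paper, additionally spelling out that $\overline\mu_0(i)\in\SetIneq$ and that $\mu_{ij}$ preserves equality, which the paper leaves implicit.
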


\begin{proof}
If $i \in I$, then $\mu_{ii}(\phi) := \lambda_{ii} \circ \phi = \id_{\lambda_0(i)} \circ \phi = \phi$,
for every $\phi \in \D F(X, \lambda_0(i))$. If 
$i =_I j =_I k$, then
\begin{center}
\begin{tikzpicture}

\node (E) at (0,0) {$\D F(X, \lambda_0(j))$};
\node[right=of E] (F) {$\D F(X, \lambda_0(k)).$};
\node [above=of E] (D) {$\D F(X, \lambda_0(i))$};

\draw[->] (E)--(F) node [midway,below] {$\mu_{jk}$};
\draw[->] (D)--(E) node [midway,left] {$\mu_{ij}$};
\draw[->] (D)--(F) node [midway,right] {$ \ \ \mu_{ik}$};

\end{tikzpicture}
\end{center}
$(\mu_{jk} \circ \mu_{ij})(\phi) := \mu_{jk}\big(\lambda_{ij} \circ \phi)
:= \lambda_{jk} \circ \big(\lambda_{ij} \circ \phi)
= \lambda_{ij} \circ \phi
 =: \mu_{ik}(\phi),$
for every $\phi \in \D F(X, \lambda_0(i))$. To show that $\mu_{ij}$ is strongly extensional, let 
$\phi, \phi{'} \in \D F(X, \lambda_0(i))$. As $\lambda_{ij}$ is strongly extensional, we have that
\begin{align*}
\ \ \ \ \ \ \ \ \ \ \ \ \ \ \ \ \ \ \ \ \ \ \ \ \ \ \ \ \ \ \ 
\mu_{ij}(\phi) \neq_{\D F(X, \lambda_0(i))} \mu_{ij}(\phi{'}) & :\TOT  \lambda_{ij} \circ \phi 
\neq_{\D F(X, \lambda_0(i))} \lambda_{ij} \circ \phi{'}\\
& :\TOT \exists_{x \in X}\big(\lambda_{ij}(\phi(x)) \neq_{\lambda_0(j)} \lambda_{ij}(\phi(x{'}))\big)\\
& \To \exists_{x \in X}\big(\phi(x) \neq_{\lambda_0(i)} \phi(x{'})\big)\\
& :\TOT \phi \neq_{\D F(X, \lambda_0(i))} \phi{'}. \ \ \ \ \ \ \ \ \ \ \ \ \ \ \ \ \ \ \ \ \ \ \ \ \ \ \ 
\ \ \ \ \ \ \ \ \ \ \ \ \  \qedhere
\end{align*}
\end{proof}

\begin{theorem}[Embedding lemma for sets with an inequality]\label{thm: el}
 Let $\B X, \B I$ be in $\SetIneq$ and $\boldmath \Lambda := (\lambda_0, \lambda_1)$ a family of sets with an inequality
 over $\B I$.
 Let also $H \in \prod_{i \in I}\mu_0(i)$, where  $\boldmath M := (\mu_0, \mu_1)$ is the family of sets with inequality 
 over $\B I$ defined in Lemma~\ref{lem: lel}. 
 If 
 $$\prod_{I}\Lambda := \bigg(\prod_{i \in I}\lambda_0(i), =_{\prod_{i \in I}\lambda_0(i)}, 
 \neq_{\prod_{i \in I}\lambda_0(i)}\bigg),$$
let the assignment routine 
 $$e^H \colon X \sto \prod_{i \in I}\lambda_0(i), \ \ \ x \mapsto e^H(x),$$
 $$\big[e^H(x)\big]_i := H_i(x), \ \ \ i \in I.$$
 \normalfont (i)
\itshape $e^H$ is a well-defined function.\\[1mm]
 \normalfont (ii)
\itshape Let the extensional subset $\Phi := \{e^H\}$ 
of $\D F\big(X, \prod_{i \in I}\lambda_0(i)\big)$. 
If the induced inequality $\neq_{(X,\Phi)}$ on $X$ is tight, then $e^H$ is an
embedding, and if $H_i$ is strongly extensional, for every $i \in I$, then $e^H$ is strongly extensional. 
%
\end{theorem}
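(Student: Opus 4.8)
The plan is to unwind the relevant definitions, since both parts reduce to direct, pointwise bookkeeping once the data are spelled out.

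For (i) I would first check that $e^H(x)$ genuinely lies in the Pi-set $\prod_{i \in I}\lambda_0(i)$, and then that $e^H$ respects $=_X$. For membership, note that $x \mapsto e^H(x)$ is a dependent assignment routine since $[e^H(x)]_i := H_i(x) \in \lambda_0(i)$ for each $i \in I$, so $e^H(x) \in \C D \C O(I, \lambda_0)$. The coherence condition of the Pi-set (Definition~\ref{def: sigmaset}) demands $[e^H(x)]_j =_{\lambda_0(j)} \lambda_{ij}([e^H(x)]_i)$ whenever $(i,j) \in D(I)$, and this is exactly the hypothesis $H \in \prod_{i \in I}\mu_0(i)$ read pointwise: membership in the Pi-set of $M$ gives $H_j =_{\mu_0(j)} \mu_{ij}(H_i) = \lambda_{ij} \circ H_i$ for $i =_I j$, and evaluating at $x$ yields $H_j(x) =_{\lambda_0(j)} \lambda_{ij}(H_i(x))$, as needed. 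That $e^H$ preserves equality is then immediate: if $x =_X x'$ then $H_i(x) =_{\lambda_0(i)} H_i(x')$ for every $i$, since each $H_i$ is a function, and the equality on the Pi-set is the pointwise one inherited from $\C D \C O(I, \lambda_0)$.

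For the first claim of (ii) I would simply observe that, since $\Phi = \{e^H\}$, the induced equality on $X$ is $x =_{(X,\Phi)} x' \TOT e^H(x) =_{\prod_{i \in I}\lambda_0(i)} e^H(x')$ and the induced inequality is $x \neq_{(X,\Phi)} x' \TOT e^H(x) \neq_{\prod_{i \in I}\lambda_0(i)} e^H(x')$. The tightness of $\neq_{(X,\Phi)}$ (Definition~\ref{def: fapartness}) says precisely $e^H(x) =_{\prod_{i \in I}\lambda_0(i)} e^H(x') \To x =_X x'$, i.e. that $e^H$ reflects equality; combined with (i) this makes $e^H$ an injective function, hence an embedding.

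For the second claim I would unwind the canonical inequality of the Pi-set, again the pointwise one from $\C D \C O(I, \lambda_0)$, namely $e^H(x) \neq_{\prod_{i \in I}\lambda_0(i)} e^H(x') \TOT \exists_{i \in I}\big(H_i(x) \neq_{\lambda_0(i)} H_i(x')\big)$. If each $H_i$ is strongly extensional, then from a witnessing index $i$ we obtain $H_i(x) \neq_{\lambda_0(i)} H_i(x') \To x \neq_X x'$, whence $e^H(x) \neq_{\prod_{i \in I}\lambda_0(i)} e^H(x') \To x \neq_X x'$, which is exactly the strong extensionality of $e^H$. The only point of care throughout is the pointwise matching: recognising that the coherence condition for membership in $\prod_{i \in I}\lambda_0(i)$ is the pointwise shadow of $H \in \prod_{i \in I}\mu_0(i)$, and that tightness of $\neq_{(X,\{e^H\})}$ is literally injectivity of $e^H$. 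I expect no genuine obstacle beyond this unwinding.
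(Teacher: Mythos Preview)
Your proposal is correct and follows essentially the same route as the paper: both arguments verify membership of $e^H(x)$ in the Pi-set by reading the coherence condition of $H \in \prod_{i \in I}\mu_0(i)$ pointwise, check that $e^H$ preserves $=_X$ componentwise, identify tightness of $\neq_{(X,\Phi)}$ with injectivity of $e^H$, and derive strong extensionality of $e^H$ from that of each $H_i$ via the pointwise inequality on the Pi-set.
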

 
\begin{proof}
(i) First we show that $e^H$ is well-defined, that is $e^H \in \prod_{i \in I}\lambda_0(i)$. If $i =_I j$, then
$$\big[e^H(x)\big]_j := H_j(x) =_{\lambda_0(j)} \big[(\mu_{ij}(H_i)\big](x) := [\lambda_{ij} \circ H_i](x)
:= \lambda_{ij}\big(H_i(x)\big) =: \lambda_{ij}\big(\big[e^H(x)\big]_i\big].$$
If $x =_X x{'}$, then $e^H(x) =_{\prod_{i \in I}\lambda_0(i)} e^H(x{'})$, as for every $i \in I$ we 
have that $H_i(x) =_{\lambda_0(i)} H_i(x{'})$.\\
(ii) The tightness of $\neq_{(X,\Phi)}$ means that $x =_{(X,\Phi)} x{'} \To x =_X x{'}$, for every $x, x{'} \in X$, where
\begin{align*}
x =_{(X,\Phi)} x{'}  & :\TOT  e^H(x) =_{\prod_{i \in I}\lambda_0(i)} e^H(x{'})\\
& :\TOT \forall_{i \in I}\big(\big[e^H(x)\big]_i =_{\lambda_0(i)} \big[e^H(x{'})\big]_i\\
& :\TOT \forall_{i \in I}\big(H_i(x) =_{\lambda_0(i)} H_i(x{'})\big).
\end{align*}
Hence, $e^H(x) =_{\prod_{i \in I}\lambda_0(i)} e^H(x{'}) \To x =_X x{'}$ follows immediately.
If each $H_i$ is strongly extensional, 
then
\begin{align*}
 \ \ \ \ \ \ \ \ \ \ \ \ \ \ \ \ \ \ \ \ \ \ \ \ \  e^H(x) \neq_{\prod_{i \in I}\lambda_0(i)} e^H(x{'}) & :\TOT 
 \exists_{i \in I}\big(\big[e^H(x)\big]_i 
 \neq_{\lambda_0(i)} \big[e^H(x{'})\big]_i\\
 & :\TOT \exists_{i \in I}\big(H_i(x) \neq_{\lambda_0(i)} H_i(x{'})\big)\\
 & \To x \neq_X x{'}. \ \ \ \ \ \ \ \ \ \ \ \ \ \ \ \ \ \ \ \ \ \ \ \ \ \ \ \ \ \ \ \
 \ \ \ \ \ \ \ \ \ \ \ \ \ \ \ \ \ \ \ \ \ \ \ \ \ \ \ \ \qedhere
\end{align*}
\end{proof}

\begin{definition}\label{def: dual}
 If $(X, =_X ; F)$ is a function space, its dual completely separated set is the structure 
 $\B X^* := (F, =_F, \neq_F ; \widehat{X})$, where $=_F$ and $\neq_F$ are induced by $\D F(X)$, and 
 $$\widehat{X} := \{\widehat{x} \mid x \in X\}, \ \ \ \ 
 \widehat{x} \colon F \to \Real, \ \ \ \widehat{x}(f) := f(x); \ \ \ f \in F, \  x \in X.$$
 The contravariant functor $\Dual \colon \FunSpace^{\op} \to \SetAffine$ is defined by $\Dual(X, =_X ; F) := \B X^*$ and
 $\Dual\big(h \colon (X, =_X ; F) \to (Y, =_Y ; G)\big) := h^* \colon \B Y^* \to \B X^*$,
 where $h^*(g) := g \circ h \in F$,
 for every $g \in G$.
\end{definition}

Clearly, one can ``identify'' $(X, =_X ; F)$ with $(\widehat{X}, =_{\widehat{X}} ; \widehat{F})$, where $ \widehat{F}
:= \{\widehat{f} \mid f \in F\}$ and $\widehat{f} (\widehat{x}) := \widehat{x}(\widehat{f}) := f(x)$, for 
every $f \in F$ and
$x \in X$, as the maps $x \mapsto \widehat{x}$ and $\widehat{x} \mapsto x$ form an iso in the category $\FunSpace$.

\begin{remark}\label{rem: beforetet}
Let $(X, =_X ; F)$ be a function space.\\[1mm]
\normalfont (i)
\itshape Let $\B \lambda_0(f) := \B R$, for every $f \in F$, and  
$\lambda_{fg} := \id_{\Real}$, for every $f, g \in F$
such that $f =_F g$. Moreover, let $\phi_0(f) := \{\id_{\Real}\}$, for every $f \in F$ and, if $f =_F g$,
let $\phi_{fg} \colon \{\id_{\Real}\} \to \{\id_{\Real}\}$ be given by the identity rule. Then the structure
$\B S := (\lambda_0, \lambda_1 ; \phi_0, \phi_1)$ is a family of complemented sets over the dual completely separated set 
$\B X^* := (\B F ; \widehat{X})$ of the function space $(X, =_X ; F)$.\\[1mm]
\normalfont (ii)
\itshape If $\B R^{F} := \big(\Real^{F}, =_{\Real^{F}}, \neq_{\Real^{F}} ; 
\bigotimes_{f \in F}\{\id_{\Real}\}\big),$
where
$$\bigotimes_{f \in F} \{\id_{\Real}\} := \big\{\id_{\Real} \circ \pr_f^{\boldmath \Lambda} = \pr_f^{\boldmath \Lambda}
\mid f \in F\big\},$$
then $\B R^{F} \in \SetComplSep$.
\end{remark}

\begin{proof}
 The proof of case (i) is straightforward and case (ii) follows from case (i) and Proposition~\ref{prp: fcl2}.
\end{proof}

Clearly, if $(X, =_X)$ is in $\Set$, and 
$\neq_{(X,F)}$ is tight, 
then $(X, =_X, \neq_{(X, F)} ; F)$ is in $\SetComplSep$.

\begin{theorem}[Tychonoff embedding theorem for function spaces and completely separated sets]\label{thm: tet} 
Let $(X, =_X ; F)$ be a function space.\\[1mm]
\normalfont (i)
\itshape If the induced inequality $\neq_{(X,F)}$ on $X$ is tight, then there is an affine embedding $($injection$)$ 
of the 
completely separated set $(X, =_X, \neq_{(X, F)} ; F)$ into the completely separated set $\B R^{F}$.\\[1mm]
 \normalfont (ii)
\itshape 
If $e \colon (X, =_X ; F) \to (\Real^{F}, =_{\Real^{F}} ; \bigotimes_{f \in F}\{\id_{\Real}\})$ is an affine 
embedding, then the  induced inequality $\neq_{(X,F)}$ on $X$ is tight, and hence
$(X, =_X, \neq_{(X, F)} ; F)$ is in $\SetComplSep$.
\end{theorem}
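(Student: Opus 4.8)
The plan is to obtain both directions from the Embedding Lemma (Theorem~\ref{thm: el}), applied to the constant family $\Lambda := (\lambda_0, \lambda_1)$ over $\B I := \B F = (F, =_F, \neq_F)$ given by $\lambda_0(f) := \Real$ and $\lambda_{fg} := \id_{\Real}$ for $f =_F g$; this is precisely the family of Remark~\ref{rem: beforetet}, whose Pi-set is $\B R^{F}$. First I would exhibit the dependent function feeding the lemma. In the notation of Lemma~\ref{lem: lel} the transport maps are $\mu_{fg}(\phi) = \lambda_{fg} \circ \phi = \phi$ on $\mu_0(f) = \D F(X, \lambda_0(f)) = \D F(X)$, and since $F$ is an \emph{extensional} subset of $\D F(X)$ its equality $=_F$ is inherited from $\D F(X)$. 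Hence the assignment $H_f := f$ is function-like: if $f =_F g$ then $\mu_{fg}(H_f) = f =_{\D F(X)} g = H_g$, so $H \in \prod_{f \in F}\mu_0(f)$. The induced map is the evaluation map $e^H \colon X \to \Real^{F}$, $[e^H(x)]_f := H_f(x) = f(x)$, and for $\Phi := \{e^H\}$ the induced (in)equalities unwind to the ones induced by $F$:
\[ x \neq_{(X,\Phi)} x{'} \TOT e^H(x) \neq_{\Real^{F}} e^H(x{'}) \TOT \exists_{f \in F}\big(f(x) \neq_{\Real} f(x{'})\big) \TOT x \neq_{(X,F)} x{'}, \]
and dually $x =_{(X,\Phi)} x{'} \TOT x =_{(X,F)} x{'}$.

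For part (i) I would feed the tightness hypothesis into the lemma. Since $=_{(X,\Phi)}$ coincides with $=_{(X,F)}$, tightness of $\neq_{(X,F)}$ is exactly tightness of $\neq_{(X,\Phi)}$, so Theorem~\ref{thm: el}(ii) yields that $e^H$ is an embedding. Each coordinate $H_f = f$ is strongly extensional with respect to $\neq_{(X,F)}$, because a witness $f(x) \neq_{\Real} f(x{'})$ is by definition a witness of $x \neq_{(X,F)} x{'}$; hence, again by Theorem~\ref{thm: el}(ii), $e^H$ is strongly extensional. Finally $e^H$ is affine, since the separating functions of $\B R^{F}$ are the projections $\pr_f$ and $\pr_f \circ e^H = f \in F$ for every $f \in F$. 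Thus $e^H$ is the required affine embedding of $(X, =_X, \neq_{(X, F)} ; F)$ into $\B R^{F}$.

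For part (ii) I would run the coordinate computation in reverse. Affineness of $e$ means $g \circ e \in F$ for every $g$ in the separating set $\bigotimes_{f \in F}\{\id_{\Real}\}$, that is $\pr_f \circ e \in F$ for every $f \in F$. Assume now $x =_{(X,F)} x{'}$; applying this to each $\pr_f \circ e \in F$ gives $(\pr_f \circ e)(x) =_{\Real} (\pr_f \circ e)(x{'})$ for every $f$, which is exactly $e(x) =_{\Real^{F}} e(x{'})$. Since $e$ is an embedding (injection), $x =_X x{'}$ follows, so $\neq_{(X,F)}$ is tight; the membership $(X, =_X, \neq_{(X, F)} ; F) \in \SetComplSep$ then follows from the remark preceding the theorem.

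The only delicate points are bookkeeping ones, and I expect the verification that $H_f := f$ satisfies the function-likeness clause of the Pi-set to be where care is genuinely needed: this is exactly the place where one uses that $F$ is an \emph{extensional} subset of $\D F(X)$, so that $=_F$ is inherited from $\D F(X)$, rather than an arbitrary embedded subset. The second point worth isolating is the observation in part (ii) that affineness forces the coordinate functions $\pr_f \circ e$ to lie in $F$, so that the equality $=_{(X,F)}$ --- which quantifies over all of $F$ --- already controls every coordinate of $e$. No new constructive subtlety arises, since every inequality in sight is a function-induced one and negation is nowhere invoked.
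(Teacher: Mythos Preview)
Your proposal is correct and follows essentially the same route as the paper: both apply the Embedding Lemma to the constant $\B F$-family of Remark~\ref{rem: beforetet} via the dependent function $H_f := f$, verify tightness of $\neq_{(X,\Phi)}$ by unwinding it to $\neq_{(X,F)}$, and obtain affineness from $\pr_f \circ e^H = f \in F$; part (ii) is likewise identical in strategy. Your explicit check that each $H_f$ is strongly extensional is a small elaboration not spelled out in the paper (where it is subsumed by the remark that affine arrows are automatically strongly extensional), but it is correct and harmless.
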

 
\begin{proof} 
(i) Let $\boldmath M := (\mu_0, \mu_1)$ be the family of sets over $\B F$ from Lemma~\ref{lem: lel}
that corresponds to the constant $\B F$-family of sets with an inequality $(\lambda_0, \lambda_1)$ 
in Remark~\ref{rem: beforetet}(i).
Let the dependent function 
$$H(F) \colon \prod_{f \in F}\mu_0(f) :=  \prod_{f \in F}\D F(X, \Real),$$
$$H(F)_f := f,  \ \ \ \ f \in F.$$
If $f =_F g$, then $H^F_g := g =_F f =: H(F)_f := \id_{\Real} \circ H(F)_f := \lambda_{fg} \circ H(F)_f 
:= \mu_{fg}\big(H(F)_f\big)$, hence $H(F)$ is well defined.
By Theorem~\ref{thm: el}(i) we get the function 
$$e^{H(F)} \colon X \to \prod_{f \in F}\Real =: \Real^F, \ \ \ x \mapsto e^{H(F)}(x),$$
$$\big[e^{H(F)}(x)\big]_f := H(F)_f(x) := f(x), \ \ \ f \in F, \ x \in X.$$
Next we show that the inequality $\neq_{(X,\Phi)}$ on $X$, where $\Phi := \{e^{H(F)}\}$, is tight. If $x, x{'} \in X$, then
\begin{align*}
 x =_{(X,\Phi)} x{'} & :\TOT e^{H(F)}(x) =_{\Real^F} e^{H(F)}(x{'})\\
 & :\TOT \forall_{f \in F}\big(\big[e^{H(F)}(x)\big]_f =_{\Real} \big[e^{H(F)}(x{'})\big]_f\big)\\
 & :\TOT \forall_{f \in F}\big(f(x) =_{\Real} f(x{'})\big)\\
 & :\TOT x =_F x{'}\\
 & \To x =_X x{'},
\end{align*}
where the last implication follows from the tightness of $\neq_{(X,F)}$. To show that $e^{H(F)}$ is affine, it
suffices to show that $\pr_f^{\boldmath \Lambda} \circ e^{H(F)} \in F$, for every $f \in F$. 
By the definition of $e^{H(F)}$ though, we have that $\pr_f^{\boldmath \Lambda} \circ e^{H(F)} =_F f \in F$.\\
%
%
(ii) If $x, x{'} \in X$ such that $x =_{(X,F)} x{'}$. we show that $x =_X x{'}$. As $e$ is an embedding, 
it suffices to show that $e(x) =_{\Real^{F}} e(x{'}) \TOT e(x) =_{\bigotimes_{f \in F}\{\id_{\Real}\}} e(x{'})
\TOT \forall_{f \in F}\big(\pr_f^{\boldmath \Lambda}(e(x)) =_{\Real} \pr_f^{\boldmath \Lambda}(e(x{'}))\big)$. 
As $e$ is affine, the composition $ \pr_f^{\boldmath \Lambda} \circ e$ is in $F$, for every $f \in F$, and we use the hypothesis
$x =_{(X,F)} x{'}$.
%
\end{proof}

Notice that in the above proof we avoided negation completely. Theorem~\ref{thm: tet} offers a 
criterion for the generation of a completely separated set $(X, =_X, \neq_{(X, F)} ; F)$ 
from a given function space $(X, =_X ; F)$.

\section{Concluding remarks}
\label{sec: concl}

In this paper we begun the study of completely separated sets within $\BST$, realising both 
the attitude within $\CM$ of using positive definitions, instead of negative ones, and the attitude of 
employing functions, instead of sets, whenever that is possible.
Completely separated sets are equipped with a positive notion of an inequality, as their given inequality 
is equivalent to the inequality induced by a set of real-valued functions.
Showing that a set with an inequality $(X, =_X, \neq_X)$ is completely separated, by finding 
an extensional subset $F$ of $\D F(X)$ such that
$=_X$ is equivalent to $=_{(X,F)}$ and $\neq_X$ is equivalent to $\neq_{(X,F)}$, is a method of proving that $\neq_X$ is
an apartness, and hence an extentional relation. As we saw in the case of Theorem~\ref{thm: sigmafset}, to prove directly
that $\neq_X$ is an apartness relation can be non-trivial.

The various distinctions that are made possible within $\CM$ facilitate the definition of interesting, new categories of sets,
such as the category $\SetIneq$ of sets with an inequality, and the categories $\SetComplSep$ and $\SetAffine$ of 
completely separated and affine sets, respectively. If $(\B C \subseteq \B D) \ \B C \subset \B D$ denotes that $\B C$ is 
a (full) subcategory of $\B D$, we have that
$$\SetAffine \subset \SetComplSep \subseteq \SetIneq.$$
We defined the corresponding notions of families of sets in these categories and their Pi- and
Sigma-sets, extending\footnote{We also added two more open-ended universes in $\BST$, the universe $\D V_0^{\neq}$ of 
predicative sets with an inequality and the universe of completely separated sets $\fV$. Clearly, both of them are
proper classes.} in this way our previous work~\cite{Pe20}.
We also introduced the notions of a global family of sets with an inequality over an index-set with an inequality and
of a global family of
completely separated sets over an index-completely separated set, in order to describe the Sigma-set of the latter.
Global families helped us to formulate a notion of strong extensionality for dependent functions that generalises the
strong extensionality of non-dependent functions.
The free completely separated set on a given set gave us a way to translate into $\BST$ the freeness of the
equality setoid on a given type within intentional $\MLTT$.
We also provided purely set-theoretic versions of the classical Stone-\v{C}ech theorem and the Tychonoff 
embedding theorem for completely regular spaces, by replacing topological spaces with 
function spaces and completely regular spaces with completely separated sets
$$\frac{\SetComplSep}{\FunSpace} \approx \frac{\crTop}{\Top}.$$
Although we cannot show constructively that all real-valued functions on a function space $(X, =_X ; F)$
are strongly extensional, by Theorem~\ref{thm: sc} we can always find a new equality on $X$ 
such that all elements of $F$ become strongly extensional! The positive formulation of our basic concepts also 
helped us to avoid negation in the proofs of Theorems~\ref{thm: sc} and~\ref{thm: tet}.
If the set $F$ in a function space is closed under the minimum operation, namely $P_F(f) \ \& \ P_F(g) 
\To P_F(f \wedge g)$, where $f \wedge g := \min\{f, g\}$, and if for every $x \in X$ there is some $f \in F$ such that $f(x) > 0$, then 
the family $(U(f))_{f \in F}$, where 
$$U(f) = \{x \in X \mid f(x) > 0\},$$
is a base for a topology of open sets on $X$. The tightness of the inequality $\neq_{(X,F)}$ is equivalent then to the 
topological fact that each singleton $\{x\}$ in $X$ is a closed set (see Proposition 3.5 in~\cite{Pe22b}). 
Theorems~\ref{thm: sc} and~\ref{thm: tet} can also be seen as purely set-theoretic versions of the
constructive Stone-\v{C}ech theorem and the Tychonoff 
embedding theorem for Bishop spaces in~\cite{Pe15, Pe15a}.

The categories $\Set, \SetIneq, \SetComplSep$ and $\SetAffine$ need to be studied further,
and to be connected with the elaboration of abstract category theory within 
$\BST$ in~\cite{Pe22f}. 
The theory of (global) families of completely separated sets is expected to be 
developed in line of the general theory of families
of sets in~\cite{Pe20}. The arrows between these families, the corresponding 
distributivity of the Pi-set over the Sigma-set, and the families of subsets of a
completely separated set, are some of the topics to be studied along the lines of~\cite{Pe20}.
Inequalities are easier to study in $\BST$, rather than in (intensional) $\MLTT$.
Also, the notion of a global family of completely separated sets over an index-completely separated set 
seems to have no (intentional)
type-theoretic analogue. The question whether the above categories of sets with an 
inequality, and the notions of their indexed families studied here, can be grasped
by intensional $\MLTT$, seems to us both interesting and hard.


\begin{thebibliography}{1}





\bibitem{AR10} P.~Aczel, M. Rathjen: \textit{Constructive Set Theory}, book draft, 2010.

\bibitem{Aw10} S.~Awodey: \textit{Category Theory}, Oxford University Press, 2010.

%
\bibitem{Bi67} E.~Bishop: \textit{Foundations of Constructive Analysis}, McGraw-Hill, 1967.
\bibitem{BC72} E.~Bishop, H.~Cheng: \textit{Constructive Measure Theory}, Mem. Amer. Math. Soc. 116, 1972.
\bibitem{BB85} E.~Bishop, D.~S.~Bridges: \textit{Constructive Analysis}, Grundlehren der Math.~Wissenschaften 279,
Springer-Verlag, Heidelberg-Berlin-New York, 1985.
\bibitem{Br12} D. S. Bridges: Reflections on function spaces, Annals of Pure and Applied Logic 163, 2012, 101-110.
\bibitem{BR87} D.~S.~Bridges, F.~Richman: \textit{Varieties of Constructive Mathematics}, 
Cambridge University Press, 1987.
%
%
\bibitem{Co17} T.~Coquand: Universe of Bishop sets, manuscript, 2017.
%


\bibitem{CDPS05} T.~Coquand, P.~Dybjer, E.~Palmgren, A.~Setzer: \textit{Type-theoretic Foundations of 
Constructive Mathematics}, book-draft, 2005.
\bibitem{Di20} H.~Diener: \textit{Constructive Reverse Mathematics}, arXiv:1804.05495, 2020.

\bibitem{Fe79} S.~Feferman: Constructive theories of functions and classes, in Boffa et al. (Eds.) 
\textit{Logic Colloquium 78}, North-Holland, 1979, 159--224.
\bibitem{He68} H.~Herrlich: \textit{Topologische Reflexionen und Coreflexionen}, Lecture Notes in Mathematics,
Springer, 1968.

\bibitem{Is13} H. Ishihara: Relating Bishop's function spaces to neighborhood spaces, Annals of Pure and Applied Logic 164, 
2013, 482-490.

\bibitem{MRR88}  R.~Mines, F.~Richman, W.~Ruitenburg: \textit{A Course in Constructive Algebra}, Springer, 1988.
\bibitem{ML75} P. Martin-L\"{o}f: An intuitionistic theory of types: predicative part, in H. E. Rose and
J. C. Shepherdson (Eds.) \textit{Logic Colloquium'73, Proceedings of the Logic Colloquium}, volume 80 of 
Studies in Logic
and the Foundations of Mathematics, pp.73-118, North-Holland, 1975.
\bibitem{ML98} P. Martin-L\"{o}f: An intuitionistic theory of types, in G. Sambin, J. M. Smith (Eds.)
\textit{Twenty-five years of constructive type theory} (Venice, 1995), volume 36 of Oxford Logic
Guides, Oxford University Press, 1998, 127--172.

\bibitem{My75} J.~Myhill: Constructive Set Theory, J.~Symbolic Logic 40, 1975, 347--382.

\bibitem{Pa05} E.~Palmgren: Bishop's set theory, Slides from TYPES Summer School 2005, Gothenburg, 
in http://staff.math.su.se/palmgren/, 2005.
\bibitem{Pa12a} E.~Palmgren: Proof-relevance of families of setoids and identity in type theory, Arch. Math. Logic, 51,
2012, 35-47. 
\bibitem{Pa12} E.~Palmgren: Constructivist and structuralist foundations: Bishop's and Lawvere's theories
of sets, Annals of Pure and Applied Logic 163, 2012, 1384--1399.
\bibitem{Pa13} E.~Palmgren: Bishop-style constructive mathematics in type theory - A tutorial, Slides, in 
http://staff.math.su.se/palmgren/, 2013. 
\bibitem{Pa14} E.~Palmgren: \textit{Lecture Notes on Type Theory}, 2014.
\bibitem{PW14} E.~Palmgren, O. Wilander: Constructing categories and setoids of setoids in type theory,
Logical Methods in Computer Science. 10 (2014), Issue 3, paper 25.

\bibitem{Pe15} I.~Petrakis: \textit{Constructive Topology of Bishop Spaces}, PhD Thesis, LMU Munich, 2015.
\bibitem{Pe15a} I.~Petrakis: Completely Regular Bishop Spaces, in A. Beckmann, V. Mitrana and M. Soskova (Eds.): 
\textit{Evolving Computability}, CiE 2015, LNCS 9136, 2015, 302--312.

\bibitem{Pe16} I.~Petrakis: The Urysohn Extension Theorem for Bishop Spaces, in 
S.~Artemov, A.~Nerode (Eds.) \textit{Symposium on Logical Foundations in Computer Science 2016},
LNCS 9537, 2016, 299--316.
\bibitem{Pe16b} I.~Petrakis: A constructive function-theoretic approach to topological compactness, 
Proceedings of the 31st Annual ACM-IEEEE Symposium on Logic in Computer Science (LICS 2016), 
July 5-8, 2016, NYC, USA, 605--614.
\bibitem{Pe19a} I.~Petrakis: Constructive uniformities of pseudometrics and Bishop topologies,
Journal of Logic and Analysis, 11:FT2, 2019, 1--44.



\bibitem{Pe19d} I.~Petrakis: Dependent sums and Dependent Products in Bishop's Set Theory, 
in P.~Dybjer et. al. (Eds) TYPES 2018, LIPIcs, Vol. 130, Article No. 3, 2019.
\bibitem{Pe19b} I.~Petrakis: Borel and Baire sets in Bishop Spaces, in F. Manea et. al. (Eds): 
Computing with Foresight and Industry, CiE 2019, LNCS 11558, Springer, 2019, 240--252.

\bibitem{Pe20b} I.~Petrakis: Embeddings of Bishop spaces, Journal of Logic and Computation, 
 exaa015, 2020, https://doi.org/10.1093/logcom/exaa015.
 

\bibitem{Pe20c} I. Petrakis: Functions of Baire-class one over a Bishop topology, in M. Anselmo et.~al.~(Eds.)
\textit{Beyond the Horizon of Computability} CiE 2020, LNCS 12098, 215--227.

\bibitem{Pe20} I.~Petrakis: \textit{Families of Sets in Bishop Set Theory}, Habilitationsschrift, LMU, Munich, 2020,
available at \texttt{https://www.mathematik.uni-muenchen.de/$\sim$petrakis/}.
 
 \bibitem{Pe21} I.~Petrakis: Direct spectra of Bishop spaces and their limits, Logical Methods in Computer
 Science, Volume 17, Issue 2, 2021, pp. 4:1-4:50. 
 \bibitem{Pe22a} I.~Petrakis: Proof-relevance in Bishop-style constructive mathematics, Mathematical
Structures in Computer Science, 2022, to appear, doi:10.1017/S0960129522000159.

\bibitem{Pe22b} I.~Petrakis: Closed subsets in Bishop topological groups, submitted, 2022.
\bibitem{Pe22c} I.~Petrakis: Strong negation in constructive mathematics, in preparation, 2022.

\bibitem{Pe22d} I.~Petrakis: On the definition of Bishop sets, in preparation, 2022.

\bibitem{Pe22e} I.~Petrakis: From Daniell spaces to the integration spaces of Bishop and Cheng, in preparation, 2022.

\bibitem{Pe22f} I.~Petrakis: Category Theory in Bishop Set Theory, in preparation, 2022.

\bibitem{Pe23} I.~Petrakis: Bases of pseudocompact Bishop spaces, in D.~S.~Bridges et.al (Eds.)
\textit{Handbook of Constructive Mathematics}, Cambridge University Press, 2023, to appear.

\bibitem{PW22} I.~Petrakis, D.~Wessel: Algebras of complemented subsets, in U. Berger et.al. (Eds): \textit{Revolutions
and Revelations in Computability}, CiE 2022, LNCS 13359, Springer, 2022, 246--258. 
\bibitem{PZ22} I.~Petrakis, M.~Zeuner: Pre-measure spaces and pre-integration spaces in predicative Bishop-Cheng 
measure theory, arXiv:2207.08684, 2022. 
\bibitem{Ri01} F.~Richman: Constructive mathematics without choice, in~\cite{Sc01}, pp.~199--205.
\bibitem{Ru91} W.~Ruitenburg: Inequality in Constructive Mathematics, Notre Dame Journal of Formal Logic, Vol.~32 (4),
1991, 533--553.
\bibitem{Sc01} P.~Schuster, U.~Berger and H.~Osswald (Eds.): \textit{Reuniting the Antipodes Constructive 
and Nonstandard Views of the Continuum}, Dordrecht: Kluwer, 2001.
\bibitem{Sh21} M.~Shulman: Affine Logic for Constructive Mathematics, arXiv:1805.07518v2, 2021.



\end{thebibliography}
\end{document}